\theoremstyle{definition}
\newtheorem{thm}{Theorem}
\newtheorem{lem}[thm]{Lemma}
\newtheorem{prop}[thm]{Proposition}
\newtheorem{cor}[thm]{Corollary}
\newtheorem{conj}[thm]{Conjecture}
\newtheorem*{defn}{Definition}
\newtheorem*{remark}{Remark}
\newcommand{\bea}{\begin{eqnarray}}
\newcommand{\eea}{\end{eqnarray}}
\newcommand{\nn}{\nonumber}
\numberwithin{equation}{section}
\numberwithin{thm}{section}
\def\Z{\mathbb{Z}}
\def\C{\mathbb{C}}
\def\F{\mathbb{F}}
\def\H{\mathrm{H}}
\def\B{\mathrm{B}}
\def\PU{\mathrm{PU}_3}
\def\PGL{\mathrm{PGL}_3(\C)}
\def\ASL{{\mathrm{ASL}_2(\F_3)}}
\def\SL{\mathrm{SL}}
\def\GL{\mathrm{GL}}
\def\U{\mathrm{U}}
\def\CP{\mathbb{CP}}
\def\A{\mathbb{A}}
\def\gm{\Gamma}
\def\go{\Gamma_{0}}
\def\pt{\widetilde{P}}
\def\He{\mathcal{H}}
\def\pe{\text{FindFlex}(\epsilon)}
\newcommand{\x}{\mathcal{X}} 
\newcommand{\xf}{\widetilde{\mathcal{X}}_\text{flex} }
\title{Topological complexity of finding flex points on cubic plane curves}
\author{Weiyan Chen}
\address{Weiyan Chen, Yau Mathematical Sciences Center, Ning Zhai, Tsinghua University, Haidian District, Beijing 100084, China.}
\email{chwy@tsinghua.edu.cn}
\author{Zheyan Wan}
\address{Zheyan Wan, Yanqi Lake Beijing Institute of Mathematical Sciences and Applications, Hefangkou Village, Huaibei Town, Huairou District, Beijing 101408, China.}
\email{wanzheyan@bimsa.cn}
\subjclass[]{55R80, 55M30} 
\keywords{cubic plane curves, topological complexity, Schwarz genus}
\address{}
\email{}
\date{}
\begin{document}

\maketitle

\vskip 10pt

\begin{abstract}
We prove a lower bound for the topological complexity, in the sense of Smale, of the problem of finding a flex point on a cubic plane curve. The key is to bound the Schwarz genus of a cover associated to this problem. We also show that our lower bound for the complexity is close to be the best possible. 
\end{abstract}

\section{Introduction}



Smale \cite{Smale} introduced the notion of topological complexity and proved a lower bound for the topological complexity of the problem of finding roots of a polynomial. Smale's lower bounds were later improved by many authors, such as Vassiliev \cite{Vassiliev},  De Concini-Procesi-Salvetti \cite{DPS}, and Arone \cite{Arone}. In this paper, we study the topological complexity of another enumerative problem: the problem of finding flex points on a cubic plane curve. 



A cubic plane curve in $\CP^2$ is the  locus of a homogeneous polynomial $F(x,y,z)$ of degree 3. A flex point on a plane curve is where the curve intersects its tangent line with multiplicity $\ge3$. Every smooth cubic plane curve has 9 distinct flex points. For $\epsilon>0$, we consider the following problem
\begin{quote}
$\pe$: given a smooth cubic plane curve, find a point in $\CP^2$ that is within $\epsilon$ distance from a flex point of the given curve. 
\end{quote}
Here distance is by the Fubini-Study metric on $\CP^2\cong S^5/S^1$ induced by the round metric on $S^5$. Following Smale \cite{Smale}, we consider an \emph{algorithm} to be a finite rooted tree, consisting of a root for the input, leaves for the output, and internal nodes of the following two types:
    \begin{quote}
    \begin{center}
    \emph{computation nodes}\ \ \begin{tikzpicture}[scale=0.3]
    \draw (0,0) circle(.2);
    \draw[->] (0,1)-- (0,0.1);
    \draw[->] (0,-0.1)-- (0,-1);
    \end{tikzpicture}
    \ \ \ and\ \ \ \ \  
    \emph{branching nodes}\ \  \begin{tikzpicture}[scale=0.3]
    \draw (0,0) circle(.2);
    \draw[->] (0,1)-- (0,0.1);
    \draw[->] (-0.08,-0.08)-- (-0.8,-0.8);
    \draw[->] (0.08,-0.08)-- (0.8,-0.8);
    \end{tikzpicture}\ \ .
        \end{center}
\end{quote}
   
   
    
   
An algorithm is said to \emph{solve} the problem $\pe$ if it inputs a cubic plane curve and outputs a point satisfying the requirement.  The \emph{topological complexity} of an algorithm is the number of branching nodes in the tree. The {topological complexity of the problem $\pe$} is the minimum of the topological complexity of all algorithms solving the problem. We defer more details to Section \ref{tree}. 


\begin{thm}
    \label{top complexity}
    For any $\epsilon<\pi/3$, the topological complexity of $\pe$ is at least 7.
\end{thm}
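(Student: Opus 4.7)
The plan is to apply Smale's framework of bounding topological complexity by Schwarz genus. An algorithm with $\tau$ branching nodes has exactly $\tau+1$ leaves, and if it solves $\pe$, the open subsets of $\x$ routed to each leaf cover $\x$ and each admits a continuous section of the evaluation projection
\[
p_\epsilon : E_\epsilon \to \x, \qquad E_\epsilon = \{(C,P) \in \x \times \CP^2 : d(P, \mathrm{Flex}(C)) < \epsilon\}.
\]
Consequently $\tau \geq g(p_\epsilon) - 1$, where $g$ denotes Schwarz genus, so it suffices to prove $g(p_\epsilon) \geq 8$. For $\epsilon < \pi/3$, I would show that $p_\epsilon$ has the same Schwarz genus as the $9$-fold covering $\pi : \xf \to \x$ of pairs (smooth cubic, chosen flex point), by checking that $p_\epsilon$ deformation retracts fiberwise onto $\pi$; the threshold $\pi/3$ is sharp, reflecting the minimal pairwise spacing of flex points on the Hesse pencil.

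The heart of the proof is then the cohomological lower bound $g(\pi) \geq 8$. The classical Schwarz inequality asserts $g(\pi) \geq k+1$ whenever there exist classes $\alpha_1,\ldots,\alpha_k \in H^*(\x)$ with $\pi^*\alpha_i = 0$ and $\alpha_1 \cup \cdots \cup \alpha_k \neq 0$, so I need to produce $7$ such classes. The monodromy of $\pi$ is the affine group $\ASL = \F_3^2 \rtimes \SL_2(\F_3)$ acting on the flex points (identified with $\F_3^2$), with point-stabilizer $H = \SL_2(\F_3)$. Via a classifying map $f : \x \to B\ASL$, the pullback $f^*$ sends $\ker\bigl(\mathrm{res} : H^*(B\ASL) \to H^*(BH)\bigr)$ into $\ker\pi^*$, so it suffices to exhibit $7$ classes in $\ker(\mathrm{res})$ whose $f$-pullbacks have nonzero cup product in $H^*(\x)$.

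The main obstacle is the non-vanishing of this $7$-fold cup product in $H^*(\x)$. I expect to work at the prime $3$: the Sylow $3$-subgroup of $\ASL$ is the Heisenberg group of order $27$, whose mod-$3$ cohomology ring has polynomial generators with large cup-length, and this prime is forced by the covering degree $9$. Using the Serre spectral sequence of $B(\F_3^2) \to B\ASL \to B\SL_2(\F_3)$ one can locate a generous supply of classes in $\ker(\mathrm{res})$, and a delicate analysis of $f$ together with computations of the cohomology of the complement of the discriminant hypersurface in $\P^9$ should yield $7$ classes whose $f$-pullback product survives as a nonzero class in $H^*(\x; \F_3)$. Coupling the representation-theoretic side (group cohomology and the restriction map) with the geometric side (cohomology of the moduli of smooth cubics) in this way will be the most technical step of the argument.
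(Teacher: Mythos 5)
Your reduction to the Schwarz genus is correct and matches the paper's strategy: an algorithm with $\tau$ branching nodes has $\tau+1$ leaves, and for $\epsilon<\pi/3$ the disjointness of the $\epsilon$-balls around the nine flex points (whose mutual Fubini--Study distance is exactly $2\pi/3$) lets you turn the algorithm's outputs into local sections of $\xf\to\x$, so that $\tau\ge g(\xf/\x)-1$. The cup-length form of Schwarz's inequality you invoke is also a valid tool, and it is more general than the paper's Theorem~\ref{homo genus}, which is stated only for Galois covers (note $\xf\to\x$ itself is not Galois, so your choice to use the cup-length version is the right move if you insist on working on $\x$ directly).

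The gap is in the heart of the argument, and it is a genuine one: you never actually produce the seven classes, and more importantly your proposed route for producing them -- pushing classes from $H^*(B\ASL;\F_3)$ forward along the classifying map $f:\x\to B\ASL$ and then trying to see a nonzero $7$-fold cup product in $H^*(\x;\F_3)$ -- requires detailed knowledge of the cohomology ring of the discriminant complement $\x\subset\CP^9$ and of the image of $f^*$, neither of which you supply and neither of which is easy. The paper circumvents precisely this difficulty with a pullback trick you are missing: one maps $\PU/K\to\x$ by $g\mapsto g\cdot F_0$ where $F_0$ is the Fermat cubic and $K=\langle A,B\rangle\cong C_3\times C_3$ is the translation subgroup (which acts freely and transitively on the nine flexes of $F_0$). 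The pullback of $\xf\to\x$ along this map is the \emph{Galois} $K$-cover $\PU\to\PU/K$, and since Schwarz genus only drops under pullback it suffices to show $g(\PU\to\PU/K)\ge 8$. This reduces the entire cohomological computation to the closed $8$-manifold $\PU/K$, where one can factor the classifying map through $B\pt$ (with $\pt=p^{-1}(K)\le\U_3$) and quote Leary's explicit presentation of $H^*(B\pt;\F_3)$ to exhibit a nonzero image in $H^7(\PU/K;\F_3)$. Without this reduction, the ``delicate analysis'' you gesture at would have to confront the cohomology of the space of smooth cubics head-on, and there is no indication in your sketch of how to make that work; so as written the proposal does not constitute a proof.
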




A key tool for bounding the topological complexity is the Schwarz genus of covers, introduced by A. Schwarz \cite{Schwarz}. 

\begin{defn}
The \emph{Schwarz genus} of a cover $E\xrightarrow[]{\xi} B$ is the minimal $k$ such that $B$ can be covered by $k$ many open subsets on which the projection map $\xi$ has a continuous section. Notation in this paper: $g({\xi})$ or $g(E/B).$ 
\end{defn}


We will focus on the following covering map. Define
\begin{align*}
    &\x := \big\{ F(x,y,z)\ |\ F \textrm{ is a nonsingular homogeneous polynomial of degree 3}  \big\} / \C^\times\\
    &\xf := \big\{ (F,p)\in \x\times\CP^2\ | \ p  \textrm{ is a flex point on the curve defined by } F \big\}.
\end{align*}
In particular $\x=\CP^9-\{\text{singular curves}\}$. The map $\xf\to \x$ given by $(F,p)\mapsto F$ is a 9-fold covering. The main topological theorem of the paper is the following:

\begin{thm}
\label{main}
    $8\le g(\xf/\x)\le 9.$
\end{thm}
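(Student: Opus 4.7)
The plan is to establish the two inequalities by different methods.

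For the lower bound, I would invoke Schwarz's cohomological estimate: if $\alpha_1, \dots, \alpha_k \in \ker\bigl(\pi^*: H^*(\x; R) \to H^*(\xf; R)\bigr)$ satisfy $\alpha_1 \cup \dots \cup \alpha_k \ne 0$, then $g(\pi) \ge k+1$, so it suffices to exhibit $7$ kernel classes with nonzero cup product. The starting point is the identification of the monodromy: as one varies a smooth cubic in $\x$, the monodromy action on the $9$ flex points is classically the Hessian group $G = \ASL$ of order $216$, acting affinely on $\F_3^2$, with the stabilizer of a flex equal to $H = \SL_2(\F_3)$ of order $24$ and index $9$. Thus $\xf \to \x$ is classified by a map $c: \x \to BG$ pulling back the natural $9$-fold cover $BH \to BG$, and every class in $\ker\bigl(\mathrm{res}: H^*(BG;R) \to H^*(BH;R)\bigr)$ pulls back via $c^*$ into $\ker(\pi^*)$.

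I would carry out the cohomology computation at the prime $3$: the Sylow-$3$ subgroup of $G$ is the extraspecial group of order $27$ and exponent $3$, while that of $H$ has order only $3$, making the restriction kernel in $\F_3$-cohomology large. Using the well-studied mod-$3$ cohomology of the order-$27$ Heisenberg group together with a Cartan--Eilenberg type spectral sequence to pass to $G$, I would produce $7$ low-degree generators in $\ker(\mathrm{res})$ whose cup product is nonzero in $H^{14}(BG;\F_3)$. The final step is to verify that $c^*$ preserves the nonvanishing of this product inside $H^*(\x; \F_3)$, which is the most delicate part; one approach is to restrict to a concrete subfamily inside $\x$ (such as a Hesse pencil of cubics) on which the pullback can be evaluated explicitly.

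For the upper bound $g(\xf/\x) \le 9$, I would construct $9$ open subsets of $\x$ each admitting a continuous section of $\pi$. A natural attempt is to fix a reference cubic $F_0 \in \x$ with flexes labeled $p_1, \dots, p_9$, and to take $U_i$ to be the largest open subset of $\x$ on which the $i$-th flex extends continuously from $F_0$; verifying that these $9$ sets cover $\x$ should rely on the Hesse configuration of $12$ flex lines in $\CP^2$ together with the affine-plane structure on the flex set.

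The main obstacle is the final step of the lower bound: the cup-length of $7$ inside $\ker(\mathrm{res})$ is a tractable group-cohomological computation, but transporting the nonvanishing through $c^*$ to $H^*(\x; \F_3)$ is where the specific geometry of the discriminant complement $\x = \CP^9 \setminus \Delta$ must enter in an essential way.
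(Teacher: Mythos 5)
Your proposal identifies the right monodromy group and aims for the right prime, but the cup-length version of Schwarz's lower bound cannot work here for a degree-counting reason. To get $g\ge 8$ by that route you need seven classes in the kernel of restriction $H^*(B\Gamma;\F_3)\to H^*(B\Gamma_0;\F_3)$ (with $\Gamma=\ASL$, $\Gamma_0=\SL_2(\F_3)$) whose cup product survives in $H^*(\x;\F_3)$. Since $\Gamma_0\hookrightarrow\Gamma$ is split by the semidirect-product projection and both abelianizations are $\Z/3$, the restriction map on $H^1$ is an isomorphism, so all kernel classes live in degree $\ge 2$ and the product of seven of them lands in degree $\ge 14$. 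But the classifying map $c:\x\to B\Gamma$ factors up to homotopy through the closed $8$-manifold $\PU/\gm$ (this is Proposition \ref{commutative diagram}: $\x\cong\PGL\times_{\gm}\He\to\PGL/\gm\simeq\PU/\gm$), so $c^*$ kills everything in degree $>8$. Your degree-$14$ product therefore pulls back to zero in $H^*(\x;\F_3)$, and restricting to the Hesse pencil (a curve) obviously cannot rescue it. The paper sidesteps this by using the \emph{homomorphism genus} bound (Theorem \ref{homo genus}), which requires only a single nonzero class of degree $i$ rather than a cup product of $k$ kernel classes, and by detecting it on the $8$-manifold $\PU/K$ where $K=C_3\times C_3$ acts freely on the flexes; a degree-$7$ class there (found via Leary's computation of $H^*(B\widetilde P;\F_3)$) gives $g\ge 8$.

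For the upper bound, your idea of exhibiting $9$ open sets admitting sections is the right goal, but the ``largest open set on which the $i$-th flex of a reference cubic extends continuously'' is not well-defined (the monodromy on the $9$ flexes is transitive, so extension depends on the path), and even if fixed, it is unclear those sets cover $\x$. The paper gets $g\le 9$ immediately from the dimension bound (Corollary \ref{coho dim}): since $\xf\to\x$ is pulled back from $\PU/\go\to\PU/\gm$ over the closed $8$-manifold $\PU/\gm$, the genus is at most $8+1=9$. In both directions, the structural fact you are missing is that the whole problem reduces, via Proposition \ref{commutative diagram}, to a cover over an $8$-dimensional base, which both caps the degree in which cohomology can obstruct and supplies the upper bound for free.
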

We will prove the lower and the upper bounds in the theorem  using different methods. Proving the lower bound is the most difficult part of the proof where we will use Leary's computation of the cohomology of certain $p$-groups \cite{Leary}. By a standard argument, the Schwarz genus minus one gives a lower bound for the topological complexity. Hence, Theorem \ref{main} implies Theorem \ref{top complexity} above.


Whether $g(\xf/\x)$ is equal to 8 or 9 remains open. In the last section of the paper, we show that $g(\xf/\x)=8$ if a particular cohomology class $\mathfrak{o}_8$ vanishes. Moreover, we prove that $3\mathfrak{o}_8=0$ in Proposition \ref{3-torsion}. This leads us to make the following conjecture.
\begin{conj}
    $g(\xf/\x)= 8.$
\end{conj}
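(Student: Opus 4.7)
The plan is to establish the vanishing of the obstruction class $\mathfrak{o}_8$. Since Proposition \ref{3-torsion} gives $3\mathfrak{o}_8=0$, this class is automatically trivial after inverting $3$, and the entire problem reduces to showing that its $3$-primary component also vanishes.

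To attack the $3$-primary part, I would first make the classifying-space picture explicit: the $9$-fold cover $\xf\to \x$ is pulled back, via a classifying map $\x \to BG$, from the universal cover $EG\times_G (G/H)\to BG$, where $G$ is the monodromy group --- presumably $\ASL$, as the preamble macro suggests, with $H$ the stabilizer of a single flex point. By a standard transfer argument, the $3$-primary part of $\mathfrak{o}_8$ is detected after restriction to a Sylow $3$-subgroup $P\subset G$. Here $|P|=27$ and $P$ is one of the extraspecial-type $3$-groups whose cohomology is explicitly described by Leary \cite{Leary} --- the same input the authors already use to prove their lower bound. One would then identify the image of $\mathfrak{o}_8$ in $H^*(BP)$ with appropriate coefficients and attempt to check, directly or after intersecting with the $N_G(P)/P$-invariant subring, that it vanishes.

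An alternative, more geometric route is to construct an explicit cover of $\x$ by eight open sets each carrying a continuous section. The Hesse configuration of the nine flex points and twelve lines (four pencils of three parallel lines, each line meeting the curve in three flex points) provides a natural supply of ``partial flex choices''; one might hope that the full $\ASL$-symmetry of this configuration lets these be assembled into only eight global sections rather than the naive nine, for instance by exploiting a well-chosen pencil on which a single section can be extended across two strata at once.

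The main obstacle, in either approach, is that the class $\mathfrak{o}_8$ is defined only once a partial section has been produced over the preceding stage of the sectional-category tower, so its vanishing is inseparable from controlling that earlier stage; equivalently, one must show that the ``merger of two strata'' required to reduce the count from nine to eight does not reintroduce monodromy obstructions coming from the Heisenberg-type $3$-Sylow subgroup. This delicate interplay between the Hesse combinatorics of the flex points and the mod-$3$ cohomology of the Heisenberg group is where I expect the real difficulty to lie, and it is precisely the point at which the authors' lower-bound technology may need to be pushed further to give a matching upper bound.
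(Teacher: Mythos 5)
The statement you are asked to prove is stated in the paper only as a conjecture; the authors explicitly say that ``whether $g(\xf/\x)$ is equal to 8 or 9 remains open,'' and all the paper establishes is $8\le g(\xf/\x)\le 9$ together with $3\mathfrak{o}_8=0$. Your submission is, by its own admission, not a proof either: phrases like ``attempt to check,'' ``one might hope,'' and ``this is where I expect the real difficulty to lie'' mark the places where the actual mathematical content would have to go. Nothing in it goes beyond the state of knowledge in the paper.

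On the substance of the sketch: the Sylow--$3$ restriction idea is in principle sound --- since $[\gm:P]=8$ is prime to $3$, restriction along $\PU/P\to\PU/\gm$ is injective on the $3$-primary part, and combined with $3\mathfrak{o}_8=0$ it would suffice to kill the restricted class --- but carrying it out requires computing in $H^8$ of an $8$-manifold with coefficients in $(\Z^8)^{\otimes 8}$ twisted by the order-$27$ Heisenberg group, precisely the calculation the paper declares ``too big for explicit calculation by hand or by a computer.'' You also cannot simply replay the paper's geometric trick from Proposition~\ref{K upper bound} for $P$ in place of $K$: the subgroup $H=\{\mathrm{diag}(1,z,z)\}$ was chosen because every nontrivial element of $K$ has three distinct eigenvalues, but the order-$3$ element $C=\mathrm{diag}(1,1,w)$ lying in $\go$ (and in a Sylow $3$-subgroup) has only two, so $H$-conjugates do meet $P$ nontrivially, and this is exactly the obstruction the paper's closing remark flags. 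The alternative route of exhibiting eight explicit open sets with sections, based on Hesse-configuration combinatorics, is offered only as a hope with no construction attached. In short, the proposal identifies reasonable directions but contains no argument that would close the gap the paper leaves open.
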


\begin{remark}[\textbf{Broader context}]
Our work is part of a broader program: to understand how complex it is to solve enumerative problems in algebraic geometry. Here ``solve" can have various meanings, according to which ``solvability" will be determined by different invariants.

For example, Galois discovered that a polynomial equation of degree 5 or higher cannot be solved by radicals because its Galois group is not solvable. In his 1870 treatise on Galois theory \cite{Jordan}, Jordan  noticed that many enumerative problems in algebraic geometry also have Galois groups which control solvability in radicals. However, the study of Galois group in enumerative geometry remained dormant until Harris' 1979 paper ``Galois groups of enumerative problems" \cite{Harris}. 


Hilbert asked the the following question known as the Hilbert's thirteenth problem:  How far can the solution to  a degree 7 polynomial equation be simplified? See \cite{FW} for a modern formulation of the problem. In 1975, Brauer \cite{Brauer} introduced the notion of resolvent degree, which determines how far the solution to a polynomial equation can be simplified. Brauer's work was mostly forgotten (never cited) until 2018, when Farb-Wolfson \cite{FW} revisited Brauer's treatment. More importantly, Farb-Wolfson proposed the research program to study the resolvent degree of enumerative problems in general. 

Our work is inspired by the aforementioned works of Harris and Farb-Wolfson. We aim to do what they did but for topological complexity: use this invariant which was originally defined for polynomial equations to study other enumerative problems. These three invariants - Galois groups, resolvent degree, and topological complexity - provide different ways to measure how complex it is to solve enumerative problems, whether the solutions are by radicals, by algebraic functions in fewer variables, or by algorithms. 
\end{remark}

\subsection*{Acknowledgement}
We would like to thank Jintai Ding, Benson Farb, Wouter van Limbeek, Craig Westerland, and Jesse Wolfson for helpful conversations. WC first heard about the problem of bounding topological complexity of enumerative problems from Benson Farb and Jesse Wolfson in a class which they co-taught at the University of Chicago in 2016. WC particularly thanks Wouter van Limbeek for his suggestion which eventually points to a proof of Proposition \ref{K upper bound} below. WC is partially supported by the Young Scientists Fund of the National Natural Science Foundation of China (Grant No. 12101349).

\section{Previous results}
In this section, we recalled several standard results that we will use later in our proofs. None of the results in this section is due to us.

\subsection{General results about Schwarz genus}
All results mentioned in this subsection were proven by Schwarz in \cite{Schwarz}. 

\begin{prop}[\cite{Schwarz}, page 71]\label{pullback SG}
Let $i^*\xi$ denote the pullback of a cover $\xi:E\to B$ along a continuous map $i: B'\to B$. Then $g(i^*\xi)\le g(\xi)$.
\end{prop}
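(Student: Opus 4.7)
The plan is to use the universal property of pullbacks to transfer sections from $\xi$ to $i^*\xi$. Suppose $g(\xi)=k$, so by definition there exist open sets $U_1,\ldots,U_k$ covering $B$, together with continuous sections $s_j:U_j\to E$ of $\xi$ for $j=1,\ldots,k$. The goal is to exhibit $k$ open sets covering $B'$, each carrying a continuous section of $i^*\xi$.

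The natural candidates for the open cover of $B'$ are $V_j:=i^{-1}(U_j)$, which are open by continuity of $i$ and cover $B'$ since the $U_j$ cover $B$. For the sections, I would work with the concrete model $i^*E=\{(b',e)\in B'\times E\mid i(b')=\xi(e)\}$, with $i^*\xi$ the projection onto the first factor. For each $j$, define $\tilde{s}_j:V_j\to i^*E$ by
\[
\tilde{s}_j(b')=\bigl(b',\,s_j(i(b'))\bigr).
\]
This is well-defined because $b'\in V_j$ means $i(b')\in U_j$, so $s_j(i(b'))$ makes sense, and the compatibility condition $\xi(s_j(i(b')))=i(b')$ holds because $s_j$ is a section of $\xi$. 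Continuity of $\tilde{s}_j$ follows from continuity of $s_j$ and $i$, and the section identity $(i^*\xi)\circ \tilde{s}_j=\mathrm{id}_{V_j}$ is immediate from the first-factor projection.

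Thus $\{V_j\}_{j=1}^k$ is an open cover of $B'$ with continuous sections $\tilde{s}_j$ of $i^*\xi$ over each $V_j$, which gives $g(i^*\xi)\le k=g(\xi)$. There is no real obstacle: the argument is essentially the functoriality of pullback sections, and the only thing to check is that one uses the explicit pullback model so that the formula for $\tilde{s}_j$ is transparent.
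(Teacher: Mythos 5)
Your proof is correct and is the standard argument for this fact; the paper simply cites Schwarz without giving a proof, and the one you supply — pulling back the open cover along $i$ and transporting sections via the explicit pullback model — is exactly the expected one.
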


\begin{thm}[\cite{Schwarz}, page 71]\label{fiberwise join}
Consider a covering 
$F\to E\xrightarrow{\xi} B$. Let $\xi^{*k}$ denote the fiberwise join bundle $F^{*k}\to E_k\xrightarrow{\xi^{*k}}B$ where $F^{*k}$ is the topological join of $F$ with itself $k$ times.
Then $g(\xi)\le k$ if and only if $\xi^{*k}$ has a continuous section.
\end{thm}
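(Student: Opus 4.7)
The plan is to exhibit a natural bijective correspondence between local sections of $\xi$ together with a partition of unity, and global sections of the fiberwise join $\xi^{*k}$, then use this in both directions. Recall that a point of the $k$-fold join $F^{*k}$ is a formal combination $t_1 x_1 + \cdots + t_k x_k$ with $t_i \ge 0$, $\sum t_i = 1$, and $x_i \in F$, subject to the relation that whenever $t_i = 0$ the coordinate $x_i$ is unused. The fiberwise join $\xi^{*k}: E_k \to B$ is the bundle whose fiber over $b$ is $F_b^{*k}$.

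For the direction $g(\xi)\le k \Rightarrow \xi^{*k}$ has a section: assume there exist open sets $U_1,\ldots,U_k$ covering $B$ together with continuous sections $s_i : U_i \to E$ of $\xi$. Assuming $B$ is paracompact (which is standard in this context), choose a partition of unity $\{\phi_i\}_{i=1}^{k}$ with $\mathrm{supp}(\phi_i)\subset U_i$. Define
\[
\sigma(b) \;:=\; \sum_{i=1}^{k} \phi_i(b)\, s_i(b)\ \in\ F_b^{*k},
\]
where each summand with $\phi_i(b)=0$ drops out of the join, so $\sigma(b)$ is well-defined for every $b\in B$, including those outside $U_i$. Continuity of $\sigma$ follows from continuity of each $\phi_i$ on $B$ and of each $s_i$ on $U_i$, since near the boundary of $U_i$ the weight $\phi_i$ tends to $0$ and the corresponding coordinate becomes irrelevant in the join's quotient topology.

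For the converse direction: suppose $\sigma: B \to E_k$ is a continuous section. Writing $\sigma(b) = \sum_{i=1}^{k} t_i(b)\, x_i(b)$ with $t_i(b)\ge 0$ and $\sum_i t_i(b)=1$, the barycentric coordinates $t_i$ are continuous functions $B\to [0,1]$. Set $U_i := \{b\in B : t_i(b)>0\}$; these are open, and they cover $B$ because $\sum_i t_i(b) = 1$ forces at least one $t_i(b)$ to be positive at each $b$. On $U_i$, the coordinate $x_i(b)\in F_b\subset E$ is uniquely determined by $\sigma(b)$ (the quotient identifications in the join only collapse a coordinate when its weight is zero), and it varies continuously in $b$. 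Thus $s_i := x_i|_{U_i}$ is a continuous section of $\xi$ over $U_i$, so $g(\xi)\le k$.

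The main technical obstacle is verifying continuity carefully across the locus where some weights vanish: in the forward direction one must check that $\sigma$ remains continuous as $b$ exits $U_i$ (handled by $\phi_i(b)\to 0$ so that $s_i(b)$ is absorbed by the join relation), and in the backward direction one must ensure that the extracted map $x_i$ is continuous on the full open set $U_i$ using the quotient definition of the join topology. A mild hypothesis such as paracompactness of $B$ is needed to guarantee the existence of subordinate partitions of unity; in the setting of this paper all base spaces are smooth manifolds (or CW complexes), so this causes no issue.
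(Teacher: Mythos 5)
The paper does not give its own proof of this theorem; it is cited directly to Schwarz's original paper. Your argument is the standard one (essentially Schwarz's): packaging an open cover with local sections into a global section of the fiberwise join via a subordinate partition of unity, and conversely recovering local sections on the open sets where a barycentric coordinate is positive. This is correct. Two small points worth being explicit about: (i) the paracompactness of $B$ that you invoke for the partition of unity is indeed a genuine hypothesis in the forward direction, and is satisfied throughout this paper since the relevant base spaces are manifolds; (ii) in the converse direction, the continuity of the extracted coordinate $x_i$ on $U_i=\{t_i>0\}$ deserves a one-line justification, which you can give by noting that the quotient map defining the join restricts to a quotient map over the saturated open set $\{t_i>0\}$, where the $i$-th projection is constant on fibers. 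With those remarks your proof is complete and matches the cited source.
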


\begin{cor}[\cite{Schwarz}, page 76]
\label{coho dim}
If $\xi:E\to B$ is a covering where $B$ is a CW complex of dimension $d$, then $g(\xi)\le d+1.$
\end{cor}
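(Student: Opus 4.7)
The plan is to combine the fiberwise-join criterion (Theorem \ref{fiberwise join}) with classical obstruction theory. By Theorem \ref{fiberwise join}, it suffices to exhibit a continuous section of the fiberwise join bundle $\xi^{*(d+1)}: E_{d+1} \to B$. The main task is therefore to verify that the topology of the new fiber $F^{*(d+1)}$ is sufficiently connected relative to the dimension of $B$ so that obstruction theory produces a global section.

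First, I would identify the fiber. Since $\xi$ is a covering, its typical fiber $F$ is a discrete space. If $|F|=1$ then $\xi$ is a homeomorphism and admits a global section trivially, so assume $|F|\ge 2$. The fiber of $\xi^{*(d+1)}$ is then the $(d+1)$-fold topological join $F^{*(d+1)}$, which is a simplicial complex of dimension $d$. Next I would invoke the standard fact that the join $X * Y$ of an $(m-1)$-connected space and an $(n-1)$-connected space is $(m+n)$-connected; iterating from the fact that a nonempty discrete space is $(-1)$-connected, one sees that $F^{*(d+1)}$ is $(d-1)$-connected (indeed, it has the homotopy type of a wedge of $d$-spheres).

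Finally, I would apply obstruction theory to the fiber bundle $\xi^{*(d+1)}: E_{d+1} \to B$. Building a section cell by cell along the skeleta of $B$, the primary obstructions to extending a section from the $k$-skeleton to the $(k+1)$-skeleton lie in $H^{k+1}(B; \pi_k(F^{*(d+1)}))$, with local coefficients if necessary. For $k \le d-1$, the coefficient group vanishes by the connectivity computed above; for $k \ge d$, the cohomology group vanishes because $B$ is a CW complex of dimension $d$. Hence no obstruction survives, a continuous section exists, and Theorem \ref{fiberwise join} gives $g(\xi)\le d+1$.

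The only genuine content is the connectivity of the iterated join, and while this is well known, it is the one step I would want to state carefully (or cite) to avoid glossing over whether twisted coefficients or the case of disconnected $B$ affect the argument; both issues are harmless, since obstruction theory with local coefficients still forces the relevant groups to vanish for dimensional reasons, and one can work on each connected component separately.
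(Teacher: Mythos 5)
Your proof is correct and follows essentially the same route as the paper's: apply the fiberwise-join criterion (Theorem \ref{fiberwise join}), observe that the fiber $F^{*(d+1)}$ is $(d-1)$-connected, and run obstruction theory so that the obstructions vanish by connectivity in low degrees and by $\dim B = d$ in high degrees. You simply spell out the connectivity of the iterated join and the local-coefficients point a bit more explicitly than the paper does.
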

\begin{proof}[Proof Theorem \ref{fiberwise join} $\Rightarrow$ Corollary \ref{coho dim}]
By obstruction theory, the obstruction to extending a section of $\xi^{*(d+1)}$ to its $i$-skeleton is a cohomology class in $\H^{i+1}(B,\pi_i(F^{*(d+1)}))$. We know the cohomology group vanishes for $i\ge d = \dim B$. If $i\le d-1$, then $\pi_i(F^{*(d+1)})=0$ since $F^{*(d+1)}$ is always $(d-1)$-connected. There is no obstruction to a section of $\xi^{*(d+1)}$.
\end{proof}




\begin{thm}[\cite{Schwarz}, page 98]\label{homo genus}
Suppose that $\xi:Y\to X$ is a Galois $\Gamma$-cover with a classifying map $cl:X\to B\Gamma$ where $B\gm$ is the classifying space of the discrete group $\gm$. 
If 
$$\H^i(B\Gamma;A)\xrightarrow{cl^*}\H^i(X;A) \text{ is nonzero for some $i$ and some $\gm$-module $A$,}$$
then $g(\xi)\ge i+1$.
\end{thm}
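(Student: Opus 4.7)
The plan is to prove the contrapositive: assuming $g(\xi) \le i$, I will show that $cl^{*}\colon \H^{i}(B\Gamma; A) \to \H^{i}(X; A)$ is the zero map for every $\Gamma$-module $A$. By Theorem~\ref{fiberwise join}, the hypothesis $g(\xi) \le i$ is equivalent to the existence of a global section of the fiberwise join $\xi^{*i}\colon E_{i} \to X$, whose fiber is the $i$-fold topological join $\Gamma^{*i}$ of $\Gamma$ regarded as a discrete set.

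The next step is to identify $\xi^{*i}$ with a pullback from a universal model. Using Milnor's construction $E\Gamma = \Gamma^{*\infty}$ with the diagonal $\Gamma$-action, one has the Borel-construction identity $\xi^{*i} \simeq Y \times_{\Gamma} \Gamma^{*i}$, and since $\Gamma$ acts freely on $\Gamma^{*i}$, the space $E\Gamma \times_{\Gamma} \Gamma^{*i}$ is homotopy equivalent to $\Gamma^{*i}/\Gamma$. Consequently, $\xi^{*i}$ is the pullback along $cl$ of the canonical map $\pi\colon \Gamma^{*i}/\Gamma \to B\Gamma$, and a section of $\xi^{*i}$ is equivalent to a continuous lift $\widetilde{cl}\colon X \to \Gamma^{*i}/\Gamma$ satisfying $\pi \circ \widetilde{cl} = cl$.

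The key topological input is a dimension count. For discrete $\Gamma$, the join $\Gamma^{*i}$ is a simplicial complex of dimension $i-1$, and the free diagonal $\Gamma$-action preserves dimension, so $\Gamma^{*i}/\Gamma$ is a CW complex of dimension $i-1$. Therefore $\H^{j}(\Gamma^{*i}/\Gamma; A) = 0$ for every $j \ge i$ and every local coefficient system $A$. The factorization $cl^{*} = \widetilde{cl}^{*} \circ \pi^{*}$ through this vanishing group forces $cl^{*}$ to be zero in degree $i$, which is the desired contrapositive.

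I expect the main subtlety to lie not in the dimensional vanishing (which is immediate once set up) but in the identification of $\xi^{*i}$ with the pullback of $\pi\colon \Gamma^{*i}/\Gamma \to B\Gamma$, and in verifying that local coefficient systems associated to a $\Gamma$-module on $B\Gamma$ pull back compatibly through $\widetilde{cl}$ so that the factorization of $cl^{*}$ truly passes through $\H^{i}(\Gamma^{*i}/\Gamma; A)$. Once the naturality of the Borel construction with twisted coefficients is in hand, the rest is formal.
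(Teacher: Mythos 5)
The paper does not give a proof of this theorem; it is stated in Section 2 (``Previous results'') as a result of Schwarz, cited to page 98 of \cite{Schwarz}, so there is no internal argument to compare against. Your blind proof is correct and is, in spirit, the standard derivation of Schwarz's ``homological genus'' lower bound from his characterization of genus via fiberwise joins (Theorem \ref{fiberwise join}).

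A few small remarks on the write-up, none of which is a gap. The identification $E_i \cong Y\times_\Gamma \Gamma^{*i}$ is actually a homeomorphism over $X$, not merely a homotopy equivalence, since a point of the $i$-fold fiberwise join is exactly a formal convex combination of points in a single fiber and the fiber is a free $\Gamma$-orbit. The map that naturally receives a section of $\xi^{*i}$ is $E\Gamma\times_\Gamma \Gamma^{*i} \to B\Gamma$ (project to the first factor); the further identification $E\Gamma\times_\Gamma \Gamma^{*i} \simeq \Gamma^{*i}/\Gamma$ uses freeness of the diagonal $\Gamma$-action and contractibility of $E\Gamma$, so the statement ``$\pi\circ\widetilde{cl}=cl$'' should really be read as a strictly commuting triangle through $E\Gamma\times_\Gamma\Gamma^{*i}$ followed by a homotopy equivalence down to $\Gamma^{*i}/\Gamma$. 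That is harmless for a cohomology factorization. Finally, the local-coefficient compatibility you flag as a potential subtlety is indeed automatic: the local system on $E\Gamma\times_\Gamma\Gamma^{*i}$ is by construction the pullback of $A$ along the projection to $B\Gamma$, so $cl^*$ factors as $H^i(B\Gamma;A)\to H^i(E\Gamma\times_\Gamma\Gamma^{*i};\pi^*A)\to H^i(X;cl^*A)$, and the middle group vanishes because $E\Gamma\times_\Gamma\Gamma^{*i}\simeq\Gamma^{*i}/\Gamma$ is a CW complex of dimension $i-1$. The dimension count itself (join of $i$ nonempty discrete sets has dimension $i-1$, and the free quotient preserves CW dimension) is exactly right.
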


\subsection{The Hesse pencil and the Hessian group}\label{section Hesse}
Now we recall several classical facts about cubic plane curves. We refer to Section 2 and Section 4 of a paper by Artebani and Dolgachev \cite{Artebani} for proofs and further discussions on these results. 

Every smooth cubic plane curve is projectively equivalent to a curve of the form:
\begin{equation}
    \label{pencil}
    F_\lambda : x^3+y^3+z^3-3\lambda xyz=0
\end{equation}
for some $\lambda\in \C$ such that $\lambda^3\ne 1$. This 1-parameter family of cubic plane curves $F_\lambda$ is called the \emph{Hesse pencil}. 
The \emph{Hessian group} is the subgroup $\gm$ of $\mathrm{Aut}(\CP^2)=\PGL=\GL_3(\C)/\C^{\times}$ preserving the
Hesse pencil:
\begin{equation}
    \label{Hesse group}
    \gm:=\big\{g\in\PGL : \text{ for any }\lambda,\ \ g\cdot F_\lambda = F_\mu  \text{ for some } \mu \big\}.
\end{equation}
Here $g\cdot F$ denotes $F$ under a change of coordinates $g\cdot F(x,y,z):=F(g^{-1}(x,y,z))$. 

The flex points on any smooth  cubic plane curve $F$ in $\CP^2$ form an abstract configuration of 9 points and 12 lines where each point lies on 4 lines and each line contains 3 points, called the \emph{Hesse configuration}. In particular, there is a bijection taking points to points and lines to lines:
\begin{equation}
    \label{Hesse config}
    \{\text{flex points on $F$}\}\xrightarrow[]{\cong} \A^2(\F_3)
\end{equation}
Here the affine space $\A^2(\F_3)$ is the vector space $\F_3^2$ but without a choice of the origin. All curves in the Hesse pencil (\ref{pencil}) share the same set of 9 flex points. The natural action of the Hessian group $\gm$ on the left hand side of (\ref{Hesse config}) preserves lines. 
Hence, we have a group homomorphism below which turns out to be an isomorphism:
$$\gm\xrightarrow[]{\cong} \mathrm{Aut}(\A^2(\F_3))= \ASL\cong\F^2_3\rtimes \SL_2(\F_3).$$
In particular, $\gm$ is a finite group of order 216. We choose a flex $p_0=[1:-1:0]$ to be the identity of $F_\lambda$ as an elliptic curve. The remaining 8 flex points are precisely the 3-torsion points. After the choice, the affine bijection (\ref{Hesse config}) becomes an isomorphism of abelian groups. The subgroup $\go\le\gm$ fixing $p_0$ is mapped to the subgroup $\SL_2(\F_3)\le\ASL$ fixing the origin.

The Hessian group $\gm\subset\PGL$ can be  generated by the following four elements: 
\begin{equation}
    \label{matrices}
    A=
\begin{bmatrix}
     0 & 0 & 1 \\
    1 & 0 & 0 \\
    0 & 1 & 0 \\    
\end{bmatrix}
,\  B=
\begin{bmatrix}
     1 & 0 & 0 \\
    0 & w & 0 \\
    0 & 0 & w^2 \\    
\end{bmatrix} 
,\  C=
\begin{bmatrix}
     1 & 0 & 0 \\
    0 & 1 & 0 \\
    0 & 0 & w \\    
\end{bmatrix} 
,\  D=
\begin{bmatrix}
     w^2 & w & 1 \\
    w & w^2 & 1 \\
    1 & 1 & 1 \\    
\end{bmatrix} 
\end{equation}
where $w:=e^{2\pi i/3}$. In particular, $C$ and $D$ generate the subgroup $\go\cong \SL_2(\F_3)$, while $A$ and $B$ generate a normal subgroup isomorphic to $\F_3^2$ which acts freely and transitively on $\A^2(\F_3)$ as translations. 




\section{Upper bound for Schwarz genus}
In this section, we prove the upper bound in Theorem \ref{main}.
\begin{thm}
\label{upper bound}
    $g(\xf/\x)\le 9$.
\end{thm}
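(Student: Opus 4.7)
The strategy is to apply Corollary~\ref{coho dim}, which asserts that $g(\xi)\le d+1$ for a covering whose base is a CW complex of dimension $d$. Since $\x = \CP^9\setminus\Delta$ is a smooth affine variety of complex dimension $9$, Andreotti--Frankel gives $\x$ the homotopy type of a CW complex of real dimension at most $9$, which already yields $g(\xf/\x)\le 10$. The goal of this section is to save one dimension, i.e., to show that $\x$ in fact has the homotopy type of a CW complex of real dimension at most $8$.

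\textbf{Key topological claim.} I claim that $\x$ is homotopy equivalent to a single generic $\PU$-orbit, which is a compact $8$-manifold of the form $\PU/H$, where $\PU\subset\PGL$ is the maximal compact subgroup (of real dimension $8$) acting on $\x$ by $g\cdot F := F\circ g^{-1}$. Three ingredients support this claim. First, the coarse quotient $\x/\PGL$ is isomorphic to the affine $j$-line $\A^1_j$ (the moduli space of smooth elliptic curves), which is contractible; since the symmetric space $\PGL/\PU\cong\R^8$ is contractible, $\x/\PU$ is also contractible. Second, by Section~\ref{section Hesse}, the generic $\PGL$-stabilizer of a smooth cubic is the finite group $H = \F_3^2\rtimes\Z/2$ of order $18$, generated by the translations $A$ and $B$ of~\eqref{matrices} together with the hyperelliptic involution (realized on the Hesse pencil by the coordinate swap $(x,y,z)\mapsto(y,x,z)$); being a compact subgroup of $\PGL$, $H$ lies inside $\PU$. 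Third, the generic $\PU$-orbit is therefore $\PU/H$, a compact manifold of real dimension $\dim\PU = 8$. Granted the claim, Corollary~\ref{coho dim} immediately gives $g(\xf/\x)\le 8+1=9$.

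\textbf{Construction of the deformation retract.} To establish the key claim, I would construct a $\PU$-equivariant deformation retract from $\x$ onto $\PU\cdot F_{j_0}$ for a generic smooth cubic $F_{j_0}$. Choose $j_0\in\A^1_j\setminus\{0,1728\}$, and let $h\colon\x\to\R$ be the pullback, via the orbit map $\x\to\x/\PU\simeq\A^1_j$, of a proper Morse function on $\A^1_j\cong\R^2$ whose unique critical point is a minimum at $j_0$. The gradient flow of $h$ with respect to a $\PU$-invariant Riemannian metric retracts $\x$ onto the fiber over $j_0$, namely the $\PGL$-orbit of $F_{j_0}$; this $\PGL$-orbit further retracts onto its compact $\PU$-suborbit $\PU/H$ using the contractibility of $\PGL/\PU\cong\R^8$.

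\textbf{Main obstacle.} The principal difficulty is handling the two special $\PGL$-orbits at $j=0$ and $j=1728$ (the elliptic curves with complex multiplication), where the stabilizer jumps to a larger finite group ($\F_3^2\rtimes\Z/6$ and $\F_3^2\rtimes\Z/4$, respectively). These appear as orbifold points of $\x/\PGL$, and a careful argument is required to ensure the pullback Morse function on $\x$ is genuinely Morse--Bott near these special orbits and that its gradient flow remains well-defined and continuous across them, so that the deformation retract extends over all of $\x$.
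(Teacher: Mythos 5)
Your key topological claim --- that $\x$ deformation retracts onto a single generic $\PU$-orbit $\PU/H$ with $|H|=18$ --- is false, and the obstruction is more fundamental than the Morse--Bott technicalities you flag at the end. Using the paper's Lemma and Proposition~\ref{commutative diagram}, $\x\cong\PGL\underset{\gm}{\times}\He$, and retracting $\PGL$ onto $\PU$ gives a fiber bundle $\He\to\x\to\PU/\gm$ whose fiber $\He=\C\setminus\{\text{cube roots of }1\}$ is homotopy equivalent to a wedge of three circles. Since $\pi_2(\PU/\gm)=\pi_2(\PU)=0$, the long exact sequence of this bundle yields a short exact sequence
\[
1\longrightarrow F_3\longrightarrow\pi_1(\x)\longrightarrow\pi_1(\PU/\gm)\longrightarrow 1,
\]
with $F_3$ free of rank $3$, so $\pi_1(\x)$ is infinite. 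By contrast, $\PU/H$ has universal cover $\SU(3)$, hence $\pi_1(\PU/H)$ is finite of order $3|H|=54$. Therefore no homotopy equivalence $\x\simeq\PU/H$ can exist, and Corollary~\ref{coho dim} cannot be applied to $\x$ with $d=8$. Your intermediate inference ``$\x/\PGL$ and $\PGL/\PU$ contractible $\Rightarrow$ $\x/\PU$ contractible'' is also unjustified because $\PGL$ does not act freely, and even if $\x/\PU$ were contractible, a contractible orbit space with jumping stabilizer type (at $j=0$ and $j=1728$) does not produce a retraction onto any single orbit.

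The paper avoids this trap: it never asserts that $\x$ has homotopy dimension $8$. Instead, after identifying $\xf\to\x$ with $\PGL\underset{\go}{\times}\He\to\PGL\underset{\gm}{\times}\He$, it observes that this cover is the \emph{pullback} of $\PGL/\go\to\PGL/\gm$ along the projection onto the first factor. Proposition~\ref{pullback SG} (Schwarz genus is non-increasing under pullback) reduces the problem to bounding $g(\PGL/\go\to\PGL/\gm)$, and Corollary~\ref{coho dim} is then applied to the base $\PGL/\gm\simeq\PU/\gm$, which genuinely is a compact $8$-manifold. The dimension $8$ bound is used on $\PU/\gm$, not on $\x$.
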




\begin{lem}
Let $\He:=\{\lambda\in\C\ |\ \lambda^3\ne1\}$ denote the space that parameterizes smooth curves in the Hesse pencil (\ref{pencil}). The Hessian group $\gm$ acts on $\He$ as its automorphism group and acts freely on $\PGL$ by multiplication from the right. There is a diffeomorphism
$$\x \cong\PGL\underset{\gm}{\times} \He$$
where the right hand side denotes the quotient space by the diagonal action of $\gm$ on the product.  
\end{lem}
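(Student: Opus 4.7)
The plan is to define the smooth map $\phi : \PGL \times \He \to \x$ by $\phi(g,\lambda) := g \cdot F_\lambda$, identify its fibers with the orbits of the balanced-product equivalence $(g\gamma,\lambda) \sim (g, \gamma\cdot\lambda)$ for $\gamma \in \gm$, and then upgrade the induced bijection $\bar\phi$ on the balanced product to a diffeomorphism by a dimension count.

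First I would observe that $\phi$ is holomorphic, since both the action of $\PGL$ on cubic forms and the family $\lambda \mapsto F_\lambda$ are algebraic, and that surjectivity is immediate from the classical Hesse normal form recalled in (\ref{pencil}): every smooth cubic is projectively equivalent to some $F_\lambda$ with $\lambda^3 \ne 1$. For well-definedness on the balanced product, recall that the $\gm$-action on $\He$ is determined by $\gamma \cdot \lambda := \mu$ where $\gamma \cdot F_\lambda = F_\mu$, which is unambiguous by the defining condition of $\gm$ in (\ref{Hesse group}); the one-line computation $\phi(g\gamma,\lambda) = g\gamma \cdot F_\lambda = g \cdot F_{\gamma\cdot\lambda} = \phi(g, \gamma\cdot\lambda)$ then shows $\phi$ descends to $\bar\phi : \PGL \times_\gm \He \to \x$.

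The substantive step will be the injectivity of $\bar\phi$. Suppose $\phi(g,\lambda) = \phi(g',\lambda')$; setting $h := g'^{-1}g \in \PGL$, this rewrites as $h \cdot F_\lambda = F_{\lambda'}$, and I must show $h \in \gm$. Here I would invoke the material of Section \ref{section Hesse}: every smooth member of the Hesse pencil has the same nine flex points, namely the base locus of the pencil, so $h$ permutes these nine points. A Cayley--Bacharach-style dimension count then shows that the linear system of cubics vanishing on these nine points has projective dimension $1$ and therefore coincides with the Hesse pencil itself; so $h$ preserves the pencil setwise, i.e., $h \in \gm$, and $(g,\lambda)$ and $(g',\lambda')$ represent the same class in $\PGL \times_\gm \He$.

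Finally, since $\gm$ is finite and acts freely on $\PGL$ by right multiplication, the balanced product is a smooth complex manifold of complex dimension $8 + 1 = 9 = \dim_\C \x$. The induced map $\bar\phi$ is thus an algebraic bijection between smooth irreducible complex algebraic varieties of the same dimension, and such a map is automatically an isomorphism (for instance by Zariski's main theorem applied to a bijective birational morphism to a normal variety), hence a diffeomorphism. I do not anticipate a serious obstacle here; the only non-routine ingredient is the identification of the base locus of the pencil with the common flex locus, which is already assembled in Section \ref{section Hesse}, and the rest is bookkeeping with the two factors of the diagonal $\gm$-action.
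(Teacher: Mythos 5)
Your proof is correct and follows the same overall route as the paper: define $\phi(g,\lambda) = g\cdot F_\lambda$, observe surjectivity from Hesse normal form, show the fibers are precisely the diagonal $\gm$-orbits, and conclude. The paper states the fiber identification without proof and invokes that $\phi$ is a Galois $\gm$-cover to obtain the diffeomorphism; you fill in the substantive injectivity step (via the base locus of the pencil and a Cayley--Bacharach count showing the net of cubics through the nine flexes is exactly the Hesse pencil) and then upgrade the bijection to an isomorphism via Zariski's main theorem rather than directly quoting covering-space theory. Both endgames are valid; yours is slightly more self-contained.
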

\begin{proof}
Consider the map 
$$\phi: \PGL\times \He\longrightarrow \x$$
given by $\phi(g,\lambda)=g\cdot F_\lambda$. This map is surjective since every smooth cubic plane curve is projectively equivalent to a curve in the Hesse pencil. The fibers of $\phi$ are precisely orbits of the Hessian group $\gm$ acting diagonally on the product. This action is free since $\gm$ acts freely on $\PGL$ as a subgroup. In particular, $\phi$ is a Galois $\gm$-cover and induces the diffeomorphism in the lemma.
\end{proof}

Similarly, we can define a map 
$\psi: \PGL\times \He\longrightarrow \xf$
by $\psi(g,\lambda)=(g\cdot F_\lambda,\  g\cdot p_0)$ where $p_0=[1:-1:0]$ is one of the common flex points of curves in the Hesse pencil. The fibers of $\psi$ are orbits of $\go$, the subgroup of $\gm$ fixing $p_0$, acting diagonally on the product. 
Hence, we obtain the following proposition.

\begin{prop}
    \label{commutative diagram}
    The following diagram commutes
\[\begin{tikzcd}
	\xf && \PGL\underset{\go}{\times} \He \\
	\x && \PGL\underset{\gm}{\times} \He
	\arrow["{}"', from=1-1, to=2-1]
	\arrow["{}", from=1-3, to=2-3]
	\arrow["\phi"',"\cong", from=2-3, to=2-1]
	\arrow["\psi"', "\cong", from=1-3, to=1-1]
\end{tikzcd}\]
where the horizontal maps are diffeomorphisms, and the vertical maps are 9-to-1 covers.
\end{prop}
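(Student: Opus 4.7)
The plan is to mirror the proof of the preceding lemma. First I would check that $\psi: \PGL\times\He\to \xf$ is well-defined: since projective transformations send flex points of a cubic bijectively to flex points of the image cubic, $g\cdot p_0$ really is a flex of the curve cut out by $g\cdot F_\lambda$, so $(g\cdot F_\lambda,\ g\cdot p_0)\in \xf$.

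Next I would show that $\psi$ descends to $\PGL\underset{\go}{\times}\He$. Using the same diagonal action of $\gm$ on $\PGL\times\He$ that appears in the preceding lemma, pick $h\in\go\le\gm$ and act on $(g,\lambda)$. The first coordinate of $\psi$ is unchanged because the action on $\He$ was defined precisely so that $\phi$ is $\gm$-invariant; the second coordinate is unchanged because $h\in\go$ fixes $p_0$ by definition of $\go$. Hence $\psi$ factors through a continuous map $\bar\psi: \PGL\underset{\go}{\times}\He\to\xf$.

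The heart of the argument is that $\bar\psi$ is a bijection. For surjectivity, take $(F,p)\in\xf$. By the preceding lemma there is $(g,\lambda)$ with $\phi(g,\lambda)=F$, and the $\phi$-fiber over $F$ is a single $\gm$-orbit. The recipe $(g',\lambda')\mapsto g'\cdot p_0$ on this orbit sweeps through all nine flex points of $F$, since $\gm$ acts transitively on the flex points of $F_\lambda$ (the normal subgroup $\F_3^2=\langle A,B\rangle$ already acts simply transitively as translations, via the identification \eqref{Hesse config}). So some representative in the orbit hits $p$. For injectivity, any two preimages in $\PGL\times\He$ lie in the same $\gm$-orbit (injectivity of $\phi$ on orbits), and the element $h\in\gm$ relating them must satisfy $h\cdot p_0=p_0$, hence lies in $\go$. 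Smoothness of $\bar\psi$ and of its inverse is inherited from the corresponding statement for $\phi$ together with the fact that $\go\le\gm$ acts freely on $\PGL$, exactly as in the previous lemma.

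Finally, commutativity of the square is immediate from the definitions, since both compositions send the class of $(g,\lambda)$ to $g\cdot F_\lambda$. The right-hand vertical map is a $\gm/\go$-cover, which is $9$-fold since $|\gm|=216$ and $|\go|=|\SL_2(\F_3)|=24$, matching the $9$-fold covering on the left. I do not anticipate any serious obstacle; the one place deserving care is the transitivity input in the surjectivity step, which is the only nontrivial ingredient beyond what was already used for $\phi$.
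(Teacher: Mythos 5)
Your proposal is correct and follows the same approach as the paper, which simply observes that $\psi(g,\lambda)=(g\cdot F_\lambda,\ g\cdot p_0)$ has fibers equal to $\go$-orbits (by the same diagonal action used for $\phi$) and leaves the remaining verifications to the reader. You have merely unwound the details — well-definedness, descent, bijectivity via transitivity of $\gm$ on flex points and the stabilizer computation, smoothness, and the degree count $|\gm|/|\go|=216/24=9$ — all of which are correct.
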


\begin{proof}[Proof of Theorem \ref{upper bound}]
By Proposition \ref{commutative diagram}, it suffices to show that the cover
\begin{equation}
    \label{product cover}
    \PGL\underset{\go}\times \He\to\PGL\underset{\gm}{\times} \He
\end{equation}
has Schwarz genus $\le 9$. Consider the natural projection 
$$p:\PGL\underset{\gm}{\times} \He \to \PGL/\gm$$ 
given by projecting onto the  first coordinates. It is straightforward to check that the cover in (\ref{product cover}) is the pullback of the cover
$$\PGL/\go\to \PGL/\gm$$
via the map $p$. Since Schwarz genus is nonincreasing under pullback (Proposition \ref{pullback SG}), it now suffices to show that 
$$g(\PGL/\go\to \PGL/\gm)\le 9.$$
Finally, observe that $\PGL$ deformation retracts onto the projective unitary group $\PU=\U_3/Z(\U_3)$. 
We have $\gm\subseteq\PU$ since $\gm$ is generated by projective unitary matrices shown in (\ref{matrices}). Therefore, $\PGL/\gm$ is homotopy equivalent to $\PU/\gm$ which is a closed manifold of real dimension 8. By Corollary \ref{coho dim}, we have 
$$g(\PGL/\go\to \PGL/\gm)\le 1+8=9.$$
\end{proof}



\section{Lower bound for Schwarz genus}
In this section,  we prove the lower bound in Theorem \ref{main}.

\begin{thm}
\label{lower bound}
    $g(\xf/\x)\ge 8$
\end{thm}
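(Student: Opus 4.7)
The plan is to produce a nonzero cohomological obstruction to a section of the 7-fold fiberwise join $\xi^{*7}:(\xf)^{*7}\to\x$ of the cover $\xi:\xf\to\x$; by Theorem \ref{fiberwise join} this is equivalent to $g(\xf/\x)\ge 8$.

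\emph{Setting up the obstruction.} The fiber of $\xi$ is the 9-element $\gm$-set $F=\gm/\go\cong\A^2(\F_3)$. The 7-fold topological join $F^{*7}$ is $5$-connected with $\pi_6(F^{*7})=\tilde H_0(F;\Z)^{\otimes 7}$ as a $\gm$-module (by the K\"unneth formula applied iteratively to reduced homology of joins of discrete sets). Standard obstruction theory then produces a primary obstruction class $o_7\in H^7(\x;\mathcal{L})$ whose vanishing is necessary for $\xi^{*7}$ to admit a global section, where $\mathcal{L}$ is the local system with fiber $\pi_6(F^{*7})$ and monodromy from the $\gm$-action. By Proposition \ref{commutative diagram}, $\xi$ is classified by a map $cl:\x\to B\gm$ factoring as $\x\to\PU/\gm\to B\gm$, and $o_7=cl^*(\omega_7)$ is the pullback of a universal class $\omega_7\in H^7(B\gm;\mathcal{L}_0)$. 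The goal is to show $cl^*(\omega_7)$ is nonzero mod 3.

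\emph{Reduction to a Sylow 3-subgroup.} Let $P\le\gm$ be a Sylow 3-subgroup. By Section \ref{section Hesse}, $P$ has order 27 and is isomorphic to the Heisenberg group over $\F_3$ (the extraspecial 3-group of exponent 3); moreover $P\cap\go$ is a Sylow 3-subgroup of $\go\cong\SL_2(\F_3)$, hence $\Z/3$, and the translation subgroup $\F_3^2\le P$ already acts transitively on $F$. Since $[\gm:P]=8$ is invertible mod~3, the restriction $H^*(B\gm;\F_3)\hookrightarrow H^*(BP;\F_3)$ is injective. Form the 8-fold pullback cover $\x_P:=cl^*(E\gm/P)\to\x$ classified by a lift $cl_P:\x_P\to BP$ of $cl$; by coprimality the transfer again yields an injection $H^*(\x;\F_3)\hookrightarrow H^*(\x_P;\F_3)$. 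It therefore suffices to show $cl_P^*(\omega_7|_{BP})\ne 0$ in $H^7(\x_P;\mathcal{L}|_P\otimes\F_3)$.

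\emph{Leary's computation and survival.} Leary's explicit description \cite{Leary} of $H^*(BP;\F_3)$ will be used to identify $\omega_7|_{BP}$ as a specific nonzero class. The restricted cover on $BP$ is the transitive 9-point $P$-cover with stabilizer $\Z/3$, and its universal 7-fold-join obstruction can be expressed by unpacking the $P$-equivariant cell structure of $F^{*7}$ and matching against Leary's generators and relations in degree~7. To conclude, survival of the class under $cl_P^*$ will be verified using the analogue of Proposition \ref{commutative diagram} for $P$: $\x_P$ is homotopy equivalent to a fiber bundle over the closed real 8-manifold $\PU/P$ with fiber $\He\simeq\bigvee^3 S^1$, and a Leray--Serre spectral sequence analysis of this bundle combined with that of $\PU\to\PU/P\to BP$ shows the selected class is not killed.

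\emph{Main obstacle.} The hardest step will be the explicit identification of $\omega_7|_{BP}$ in Leary's presentation: one must interpret the equivariant obstruction arising from the $P$-equivariant 7-fold join of $F$ in terms of Leary's generators of the mod-3 Heisenberg cohomology ring, verify that the resulting degree-7 class is genuinely nonzero, and confirm that it survives both the spectral sequence of $\PU\to\PU/P\to BP$ and the Leray--Serre spectral sequence of $\x_P\to\PU/P$.
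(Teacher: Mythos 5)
Your overall instincts are right (work mod $3$, look in degree $7$, invoke Leary, and reduce to a $3$-subgroup of $\gm$), but the paper's argument takes a substantially cleaner route, and the steps you identify as the ``main obstacle'' are in fact the genuine gap in your proposal; they are not carried out, and the choices you have made make them considerably harder than they need to be.

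The decisive simplification in the paper is to avoid the explicit obstruction class entirely. Instead of computing $\omega_7\in H^7(B\gm;\mathcal{L}_0)$ with its enormous twisted coefficient module $\widetilde{H}_0(F;\Z)^{\otimes 7}$, the paper uses Schwarz's homological criterion (Theorem \ref{homo genus}): for a \emph{Galois} $G$-cover $Y\to X$ it suffices to exhibit \emph{any} coefficient module $A$ for which $cl^*:H^i(BG;A)\to H^i(X;A)$ is nonzero, and one may take $A=\F_3$ trivial. To put the problem in the form this criterion requires, the paper does not restrict to a full Sylow $3$-subgroup $P$ of order $27$ as you do; it restricts to $K=\langle A,B\rangle\cong C_3\times C_3$ of order $9$, the translation subgroup, which acts \emph{freely and transitively} on the nine flexes. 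This has two payoffs. First, since $K$ stabilizes the Fermat cubic $F_0$, the map $\PU/K\to\x$, $g\mapsto g\cdot F_0$, pulls back $\xf\to\x$ to the honest principal $K$-cover $\PU\to\PU/K$, so Schwarz's criterion applies directly, and Proposition \ref{pullback SG} then transfers the bound to $\xf\to\x$; with your Sylow $P$, the point stabilizer is $\Z/3$ and the cover is not principal, so Theorem \ref{homo genus} does not apply to it as stated. Second, the classifying map $\PU/K\to BK$ factors through $B\pt$ where $\pt$ is the preimage of $K$ in $\U_3$; Leary's Theorem $2$ that the paper cites gives the mod-$3$ cohomology ring of $B\pt$ (the $\U_1$-central extension of $C_3^2$), not of the discrete Heisenberg group $P$, and the fibration $\U_3\to\U_3/\pt\to B\pt$ makes the kernel of the map to $H^*(\PU/K;\F_3)$ exactly the ideal generated by the Chern classes $c_1,c_2,c_3$. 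One is then reduced to a short, concrete check in Leary's presentation that some degree-$7$ element in the image of $p^*:H^7(BK;\F_3)\to H^7(B\pt;\F_3)$ avoids that ideal; the paper finds $yxx'^2$ works. No equivariant cell structure of $F^{*7}$, no twisted coefficients, and no two-layer spectral-sequence survival argument over $\x_P$ is needed.

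In short: what you have written is a plan, not a proof. The computation you call the hardest step is left entirely open, and, as sketched, it is made unnecessarily difficult by (i) tracking the twisted obstruction module $\pi_6(F^{*7})$ rather than invoking Theorem \ref{homo genus} with trivial $\F_3$ coefficients, and (ii) using the Sylow $3$-subgroup $P$ of order $27$ (nontrivial stabilizers, not a principal cover, and a harder cohomology ring) rather than $K\cong C_3^2$ with its $\U_1$-extension $\pt$, for which Leary's Theorem $2$ and the Serre spectral sequence of $\U_3\to\U_3/\pt\to B\pt$ resolve everything in a few lines.
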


Let $K$ denote the subgroup of $\gm$ generated by the two elements $A,B\in\gm$ as in (\ref{matrices})
\begin{equation}
    \label{K}
    K:=\langle A,B\rangle.
\end{equation}
Notice that $K$ is abstractly isomorphic to a product of cyclic groups $C_3\times C_3$. 

\begin{prop}\label{homological-genus}
Let $BK$ denote the classifying space of $K$. Consider the classifying map $cl: \PU/K\to BK$ associated to the principal $K$-cover $\PU\to\PU/K$. 
\begin{enumerate}
    \item The following induced map is nonzero:
    $$H^7(BK;\F_3)\xrightarrow[]{cl^*} H^7(\PU/K;\F_3)$$
    \item In particular, $g(\PU\to \PU/K)\ge8.$
\end{enumerate}
\end{prop}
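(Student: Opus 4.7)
Part (2) is an immediate application of Schwarz's homological lower bound (Theorem \ref{homo genus}) to the classifying map $cl\colon \PU/K\to BK$ of the principal $K$-cover $\PU\to\PU/K$: once part (1) is established, $g(\PU\to\PU/K)\ge 7+1=8$. So the entire content is in part (1), and the plan is to lift the problem to $\SU_3$. Let $Z=Z(\SU_3)\cong C_3$ and $K'\le\SU_3$ be the preimage of $K$ under $\SU_3\to\PU$. A direct computation with the matrices in (\ref{matrices}) shows $ABA^{-1}B^{-1}=\omega^{-1}I\in Z$, so $K'$ is the Heisenberg group of order $27$, fitting into a central extension $1\to Z\to K'\to K\to 1$. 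Since $K$ acts freely on $\PU$, $K'$ acts freely on $\SU_3$ with $\SU_3/K'=\PU/K$. Letting $cl'\colon\PU/K\to BK'$ classify the principal $K'$-bundle $\SU_3\to\PU/K$ and $q\colon K'\twoheadrightarrow K$ be the quotient, one has $cl=(Bq)\circ cl'$, so $cl^*=(cl')^*\circ(Bq)^*$.

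The advantage is that $\SU_3$ is simply connected with $H^*(\SU_3;\F_3)=\Lambda(x_3,x_5)$, so the Serre spectral sequence of $\SU_3\to\PU/K\to BK'$ has untwisted
\[
E_2^{p,q}=H^p(BK';\F_3)\otimes\Lambda(x_3,x_5),
\]
and its entire differential structure is determined by the transgressions $\tau(x_3)$ and $\tau(x_5)$. These pull back from the universal $\SU_3$-bundle along $BK'\to B\SU_3$: they are the images of the universal Chern classes $c_2\in H^4(B\SU_3;\F_3)$ and $c_3\in H^6(B\SU_3;\F_3)$, i.e.\ the mod-$3$ Chern classes of the standard representation $K'\hookrightarrow\SU_3\subset U_3$. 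Because $H^q(\SU_3;\F_3)$ is nonzero only for $q\in\{0,3,5,8\}$, the only differentials reaching $E_r^{7,0}$ are $d_4$ from $E_4^{3,3}$ and $d_6$ from $E_6^{1,5}$ (both of which, by a short check, are unaffected by earlier differentials). Hence
\[
E_\infty^{7,0}\;=\;H^7(BK';\F_3)\,\big/\,\bigl(c_2(K')\cdot H^3(BK';\F_3)+c_3(K')\cdot H^1(BK';\F_3)\bigr),
\]
and showing $cl^*\ne 0$ in degree $7$ reduces to producing $\alpha\in H^7(BK;\F_3)$ whose image $(Bq)^*(\alpha)\in H^7(BK';\F_3)$ is not in this ideal.

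At this stage Leary's computation of $H^*(BK';\F_3)$ enters essentially. Leary's explicit presentation of the cohomology of the Heisenberg group $K'$ lets one write $c_2(K')$ and $c_3(K')$ as specific polynomials in the ring generators, and the inflation $(Bq)^*$ admits a transparent description because $q$ is the abelianization of $K'$. Together these identifications reduce the problem to a finite linear-algebra check in degree $7$ of Leary's ring: one verifies that the $8$-dimensional subspace $(Bq)^*\bigl(H^7(BK;\F_3)\bigr)$ is not contained in the ideal generated by $c_2(K')$ and $c_3(K')$, and one then takes $\alpha$ to be any monomial $x_i y_1^a y_2^b$ (with $a+b=3$) — possibly in a $\SL_2(\F_3)$-symmetric combination — realizing this non-containment.

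I expect this final ring-theoretic step to be the main obstacle: the ideal $\bigl(c_2(K'),c_3(K')\bigr)$ is large enough in degree $7$ that the non-containment is not at all automatic, and identifying the two Chern classes within Leary's explicit presentation of the Heisenberg cohomology — and then doing the degree-$7$ linear-algebra check — is precisely where the nontrivial work lies. The rest of the argument (the $\SU_3$-lift, the factorization $cl=(Bq)\circ cl'$, the shape of the Serre spectral sequence, and the deduction of part (2) from part (1)) is formal.
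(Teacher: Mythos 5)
Your proposed factorization — lift $K$ to the finite Heisenberg group $K'\le\SU_3$ of order $27$, run the Serre spectral sequence of $\SU_3\to\PU/K\to BK'$, and identify the image of $cl^*$ in degree $7$ with $(Bq)^*\bigl(H^7(BK)\bigr)$ modulo the ideal $(c_2,c_3)\subset H^*(BK';\F_3)$ — is a legitimate and closely parallel route, but it differs from the paper in one structural choice: the paper lifts to the \emph{compact circle extension} $\pt:=p^{-1}(K)\le\U_3$ rather than to the finite group $K'$, and runs the spectral sequence of $\U_3\to\U_3/\pt\to B\pt$ with three transgressing generators $a_1,a_3,a_5\mapsto c_1,c_2,c_3$. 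This choice is not cosmetic: Leary's Theorem 2 gives $H^*(B\pt;\F_3)$ a small and very concrete presentation (six generators $y,y',x,x',c_2,c_3$ with short relations), whereas the cohomology of the finite Heisenberg group $K'$ — Leary's Theorem 1 — has more generators and more intricate relations. Working with $\pt$ makes the image of $p^*:H^*(BK)\to H^*(B\pt)$ simply the subring $\langle y,y',x,x'\rangle$ modulo two evident relations, and makes the kernel of the edge map the honest ideal $(c_1,c_2,c_3)$, so the degree-$7$ check becomes a few lines: one exhibits $yxx'^2=y'x^2x'$ in the image of $p^*$ but visibly outside $(c_1,c_2,c_3)$.

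The more serious issue is that you did not carry out the step that carries all the content. You write that ``the final ring-theoretic step'' — identifying $c_2(K'),c_3(K')$ in Leary's presentation, computing the degree-$7$ piece of the ideal $(c_2,c_3)$, and verifying that $(Bq)^*\bigl(H^7(BK;\F_3)\bigr)$ is not contained in it — ``is precisely where the nontrivial work lies,'' and you stop there. That computation is exactly the proposition; everything before it (the lift, the shape of the spectral sequence, the deduction of part (2)) is formal, as you yourself note. In its current form the argument establishes only that the statement \emph{reduces} to a finite linear-algebra check in Leary's ring, not that the check succeeds. To complete your variant you would need to write down $c_2(K'),c_3(K')$ explicitly (e.g.\ from the restriction of Chern classes along $K'\hookrightarrow\SU_3$), expand the degree-$7$ part of the ideal, and exhibit a class not in it — or, more efficiently, switch to $\pt$ as the paper does, where the same check is nearly immediate.
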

\begin{proof}[Proof Proposition \ref{homological-genus} $\Rightarrow$ Theorem \ref{lower bound}]
Consider the Fermat cubic curve
$$F_0(x,y,z)=x^3+y^3+z^3.$$
The subgroup $K=\langle A,B\rangle$ preserves $F_0$. Hence, the map $g\mapsto g\cdot F_0$ gives a well-defined map
$$\phi:\PU/K\to \x.$$
Moreover, we claim that the pullback of the cover $\xf\to\x$ along the map $\phi$ is exactly the cover $\PU\to \PU/K$. This is because $K\cong C_3\times C_3$ acts on the 9 flex points of $F_0$ freely and transitively as translations of $\A^2(\F_3)$.
By Proposition \ref{pullback SG}, we have
    $$g(\xf\to\x)\ge g(\PU\to\PU/K)\ge 8.$$
\end{proof}

\begin{proof}[Proof of Proposition \ref{homological-genus}]
    (2) directly follows from (1) by Theorem \ref{homo genus}. We focus on (1).

    The first step is to factor the classifying map $cl:\PU/K\to BK$ into a composition of two maps. Define $\pt:=p^{-1}(K)$ where $p:\U_3\to\PU$ in the natural projection. Then we have a central extension of groups 
    \begin{equation}
        \label{central extension}
            1\to \U_1\to \pt\xrightarrow[]{p} K\to1.
    \end{equation}
    Notice that $\PU/K$ and $\U_3/\pt$ are the same space by definitions. Then we have the following commutative diagram:
    \begin{equation}
        \label{factor}
        \xymatrix{
\H^i(BK;\F_3)\ar[r]^{cl^*}\ar[d]_{p^*}&\H^i(\PU/K;\F_3)\ar[d]^{=}\\
\H^i(B\pt;\F_3)\ar[r]^{\phi}&\H^i(\U_3/\tilde{P};\F_3),}
    \end{equation}
where  $B\pt$ denotes the classifying space of the topological group $\pt\le \U_3$. To show that $cl^*$ is nonzero when $i=7$, we will show that the image of $p^*$ is not contained in the kernel of $\phi$.

The central extension (\ref{central extension}) induces a fibration $B\U_1\to B\pt\xrightarrow[]{p} B K$. The Serre spectral sequence associated to this fibration had been worked out by Leary \cite{Leary}. 
\begin{thm}[Leary, Theorem 2 in  \cite{Leary}]
\label{Leary}
    $H^*(B\pt;\F_3)$ is generated by elements $y,y',x,x',c_2,c_3$ of degrees $1,1,2,2,4,6$, respectively,  
    subject to the following relations:
    \begin{align*}
        xy'&=x'y&  x^3y'&=x'^3y\\
        c_2y&=-x^2 y &c_2y'&=-x^2 y'\\
        c_2x&=-x^3 &c_2x'&=-x'^3\\
        c_2^2&=x^4+x'^4-x^2x'^2.&&
    \end{align*}
Moreover, $c_2$ and $c_3$ are the pullbacks of the second and third Chern classes in $H^*(B\U_3;\Z)$ via the inclusion $\pt\hookrightarrow \U_3$ and  modulo 3. The first Chern class pulls back to $c_1=yy'.$
\end{thm}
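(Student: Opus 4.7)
The plan is to compute $H^*(B\pt;\F_3)$ by running the Serre spectral sequence of the fibration $B\U_1\to B\pt\to BK$ arising from the central extension (\ref{central extension}). Since $K\cong C_3\times C_3$, we have $H^*(BK;\F_3)=\F_3[x,x']\otimes\Lambda(y,y')$ with $y,y'$ in degree $1$, $x=\beta y$ and $x'=\beta y'$, while $H^*(B\U_1;\F_3)=\F_3[u]$ with $u$ in degree $2$. So $E_2^{*,*}=\F_3[x,x',u]\otimes\Lambda(y,y')$, and the only possible differentials are transgressions on powers of $u$.

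The first step is to pin down $d_3(u)\in H^3(BK;\F_3)$. By construction, $d_3(u)$ is the mod-$3$ reduction of the integral class classifying the circle bundle $B\pt\to BK$, equivalently the extension class of (\ref{central extension}). A direct matrix computation shows that the commutator $[A,B]\in \U_1\subset\pt$ is a primitive cube root of unity, so (\ref{central extension}) is a Heisenberg-type extension whose commutator pairing is the standard symplectic form on $K=\F_3^2$ valued in $\mu_3\subset \U_1$. The corresponding extension class is, up to a unit, the mod-$3$ reduction of $\beta(yy')$, giving $d_3(u)=\pm(xy'-x'y)$. This immediately yields the first relation $xy'=x'y$ in $H^*(B\pt;\F_3)$, matching Leary's list.

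Next, I would propagate $d_3$ through the spectral sequence using Kudo's transgression theorem: since $P^1 u=u^3$ in $H^*(B\U_1;\F_3)$, Kudo's theorem produces a differential on $u^3$ whose target is the Steenrod-operation image of $d_3(u)$, computable via the Cartan formula. Iterating this procedure handles all higher powers of $u$ and determines $E_\infty$. The remaining generators $c_2$ in degree $4$ and $c_3$ in degree $6$ are identified as the mod-$3$ reductions of the second and third Chern classes of the defining representation $\pt\hookrightarrow\U_3$, while $c_1=yy'$ is verified by computing the first Chern class of the determinant character of $\pt$ directly from the low-degree portion of the spectral sequence.

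The main obstacle I expect is verifying that Leary's list of relations is complete, in particular the mixed Chern-class relations such as $c_2 y=-x^2 y$, $c_2 x=-x^3$, and $c_2^2=x^4+x'^4-x^2 x'^2$. A clean strategy is to compute the Poincar\'e series of the putative presentation and match it term by term with the one extracted from $E_\infty$ after all differentials have been accounted for; equality forces the presentation to be complete. As a sanity check, one can restrict further along $\pt\hookrightarrow \U_3$ and verify the Chern-class identities against known relations in $H^*(B\U_3;\F_3)$ via the splitting principle.
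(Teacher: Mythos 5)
The paper does not prove this theorem at all---it is cited verbatim as Theorem 2 of Leary's paper \cite{Leary} and used as a black box. So there is no ``paper's own proof'' to compare against; what you have written is a reconstruction of Leary's argument.

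As such, your sketch does capture the actual skeleton of Leary's computation: run the Serre spectral sequence of $B\U_1\to B\pt\to BK$, identify $d_3(u)$ with the mod-$3$ reduction of the extension class, use Kudo's transgression theorem to propagate to $u^3$ (giving the relation $x^3y'=x'^3y$ from $P^1(xy'-x'y)$), and identify $c_2,c_3$ as restrictions of the Chern classes of the defining $3$-dimensional representation $\pt\hookrightarrow\U_3$. Your verification that $[A,B]$ is a primitive cube root of unity in the central $\U_1$ is correct ($ABA^{-1}B^{-1}=w^2 I$), and the identification $d_3(u)=\pm(xy'-x'y)=\pm\beta(yy')$ is right: $H^3(BK;\Z)\cong\Z/3$ is generated by $\tilde\beta(yy')$ and reduces to $xy'-x'y$.

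Two places where the sketch is genuinely thin. First, ``iterating Kudo's theorem'' is not a mechanical step: Kudo gives the transgression of $u^p=u^3$, but you still must check that $u^3$ actually survives to the $E_7$ page (its earlier differentials $d_4,\dots,d_6$ vanish for positional reasons, but this has to be said), and you must separately account for what happens to $u^2, u\cdot u^3$, etc., which are not themselves transgressive; working out $E_\infty$ requires tracking the full multiplicative structure of the pages, not just transgressions. Second, the spectral sequence only computes the associated graded of $H^*(B\pt;\F_3)$; passing from $E_\infty$ to the ring itself involves extension problems (e.g.\ whether $c_2^2$ lands on the nose at $x^4+x'^4-x^2x'^2$ or picks up filtration corrections). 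Your Poincar\'e-series argument does close the loop once you have a surjection from the presented ring and have verified the relations hold, but establishing those relations in the first place---particularly $c_2y=-x^2y$, $c_2^2=x^4+x'^4-x^2x'^2$---is where Leary does real work (restricting to subgroups, comparing with Chern class identities). These are exactly the points you flag as ``the main obstacle,'' so the assessment is fair, but as written the proposal is a plan rather than a proof.

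Since the paper simply cites \cite{Leary}, none of this affects the correctness of the paper; if you want to include a proof sketch, the two gaps above are what would need to be filled in.
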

Recall that $H^*(BC_3;\F_3)$ is a free graded algebra generated by $\tilde{y}$ and $\tilde{x}$ of degree 1 and 2, respectively. Since $K\cong C_3\times C_3$, we can find generators $\tilde{y},\tilde{y}',\tilde{x},\tilde{x}'$ for $H^*(\B K;\F_3)$ by the K\"unneth formula. By Leary's proof of Theorem \ref{Leary} in \cite{Leary}, the map $p^*$ in (\ref{factor}) sends the generators $\tilde{y},\tilde{y}',\tilde{x},\tilde{x}'$ to $y,y',x,x'$, respectively. Hence, we have the following presentation of subring:
$$p^*\big(H^*(BK;\F_3)\big) = \langle y,y',x,x'\rangle /(xy'-x'y,x^3y'-x'^3y)$$
where the quotient ideal is given by the first two relations in Theorem \ref{Leary}. In particular, on $H^7$, we have:
\begin{equation}
    \label{image of p}
    p^*\big(H^7(BK;\F_3)\big)=\mathrm{span}\{yx^3,yx^2x',yxx'^2,yx'^3,y'x^3,y'x^2x',y'xx'^2,y'x'^3\}.
\end{equation}
Those elements above spanned $p^*\big(H^7(BK;\F_3)\big)$, but subject to linear relations in the ideal $(xy'-x'y,x^3y'-x'^3y)$.

Next, we study the kernel of $\phi$ in (\ref{factor}). We first recall some basic facts about the Serre spectral sequence of the universal principal $\U_3$-bundle:
\begin{equation}
\label{EG}
    \U_3\to E\U_3\to B\U_3
\end{equation}
By Borel's Transgression Theorem, each generator $a_{2i-1}$ of $H^{2i-1}(\U_3;\Z)$ transgresses to the $i$-th Chern class in $H^{2i}(B\U_3;\Z)$ for $i=1,2,3$. 

Now, the fibration
\begin{equation}
    \label{pt}
    \U_3\to\U_3/\pt\to B\pt
\end{equation}
is homotopy equivalent to the pullback of the universal bundle (\ref{EG}) along a classifying map $B\pt\to B\U_3$. Consequently, in the (cohomological) Serre spectral sequence of (\ref{pt}), each generator $a_{2i-1}$ of $H^{2i-1}(\U_3;\F_3)$ transgresses to  $c_i\in H^{2i}(B\pt;\F_3)$ as in Theorem \ref{Leary}. Hence, the kernel of $\phi:H^*(B\pt;\F_3)\to H^*(\U_3/\pt;\F_3)$ is the ideal generated by $c_1,c_2,c_3.$ In particular, on $H^7$, we have
$$\text{Ker}\phi=c_1\H^5(B\tilde{P};\F_3)+c_2\H^3(B\tilde{P};\F_3)+c_3\H^1(B\tilde{P};\F_3).$$


By the relations in Theorem \ref{Leary}, we have that
\bea
\ \ \ \ \ \  \ \ \ \ \ c_1\H^5(B\tilde{P};\F_3)=(yy') \cdot\mathrm{span}\{ yx^2,yxx',yx'^2,c_2y,y'x^2,y'xx',y'x'^2,c_2y'\}=0,
\eea
\bea
c_2\H^3(B\tilde{P};\F_3)=c_2\cdot \mathrm{span}\{  yx,yx'=y'x,y'x'\}
\eea
where
\bea
c_2yx&=&-x^2yx=-yx^3,\nn\\
c_2y'x'&=&-x'^2y'x'=-y'x'^3,\nn\\
c_2yx'&=&-x^2yx'=-yx^2x',
\eea
and
\bea
c_3\H^1(B\tilde{P};\F_3)=c_3\cdot \mathrm{span}\{  y,y'\}.
\eea
In particular, we find an element 
$yxx'^2=y'x^2x'\in\text{Im}p^*$ but $yxx'^2=y'x^2x'\not\in\text{Ker}\phi$. Hence, $\text{Im}p^*\not\subseteq\text{Ker}\phi$. Therefore, $cl^*:\H^7(BK;\F_3)\to \H^7(\PU/K;\F_3)$ is nonzero.

\end{proof}

\section{Topological complexity of flex-finding algorithms}
\label{tree}
Following Smale \cite{Smale}, we consider an \emph{algorithm} to be a finite rooted tree, consisting of a root for the input, leaves for the output, and internal nodes of the following two types:
\begin{itemize}
    \item \emph{Computation nodes}\ \ \begin{tikzpicture}[scale=0.3]
   
    \draw (0,0) circle(.2);
    \draw[->] (0,1)-- (0,0.1);
    \draw[->] (0,-0.1)-- (0,-1);
   
    \end{tikzpicture}\ \ , which receives a sequence of real numbers, computes using rational operations $+,\ -,\ \times,\ \div\ $, and gives another sequence of real numbers to the next node;
    
    \item \emph{Branching nodes}\ \ \begin{tikzpicture}[scale=0.3]
   
    \draw (0,0) circle(.2);
    \draw[->] (0,1)-- (0,0.1);
    \draw[->] (-0.08,-0.08)-- (-0.8,-0.8);
    \draw[->] (0.08,-0.08)-- (0.8,-0.8);
    \end{tikzpicture}\ \ , which go right or left according to whether an inequality $h\le 0$ is true or false, where $h$ is one of the real numbers in the sequence which the node receives.
\end{itemize}
An algorithm is said to \emph{solve} the following problem
\begin{quote}
$\pe$: given a smooth cubic plane curve, find a point in $\CP^2$ that is within $\epsilon$ distance from a flex point of the given curve. 
\end{quote}
if it inputs a sequence of real numbers recording the real and imaginary parts of the coefficients of $F(x,y,z)$ defining an arbitrary cubic plane curve and outputs a sequence of real numbers recording the real and imaginary parts of the coordinates of a point $[x:y:z]$ in $\CP^2$ satisfying the requirement. The \emph{topological complexity} of an algorithm is the number of branching nodes in the tree. The {topological complexity of the problem $\pe$} is the minimum of the topological complexity of all algorithms solving the problem.




We now prove Theorem \ref{top complexity} which gives a lower bound for the topological complexity of any algorithm that finds a flex point on cubic plane curves. Theorem \ref{top complexity} directly follows from Theorem \ref{lower bound} together with the following proposition. 
\begin{prop}
    \label{Smale principle}
    For  any $\epsilon<\pi/3$, the topological complexity of the problem $\pe$ is at least $g(\xf/\x)-1$.
\end{prop}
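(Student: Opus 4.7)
My strategy is the standard Smale-type reduction: translate the tree structure of any algorithm solving $\pe$ into an open cover of $\x$ such that $\xf\to\x$ admits a continuous section on each element. Since a finite rooted binary tree with $\tau$ branching nodes (and only unary or binary internal nodes) has exactly $\tau+1$ leaves, this will yield $g(\xf/\x)\le\tau+1$, which is equivalent to $\tau\ge g(\xf/\x)-1$.

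Let $\mathcal{A}$ be an algorithm of topological complexity $\tau$ solving $\pe$, with leaves $\ell_1,\dots,\ell_{\tau+1}$. For each leaf $\ell$ the set $U_\ell\subseteq\x$ of inputs whose execution terminates at $\ell$ is locally closed, being a finite conjunction of weak and strict sign conditions on rational functions of the coefficients of $F$, and the $U_\ell$ partition $\x$. On $U_\ell$ the output $s_\ell\colon U_\ell\to\CP^2$ is continuous, as it is a composition of rational operations read off from the computation nodes along the path to $\ell$, and by the solution property of $\mathcal{A}$ we have $d(s_\ell(F),q)<\epsilon$ in the Fubini--Study metric for some flex $q$ of $F$. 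I then promote each $s_\ell$ to a continuous section $\sigma_\ell\colon V_\ell\to\xf$ on an open neighborhood $V_\ell\supseteq U_\ell$: locally on the base, the nine flexes can be labeled by continuous functions $p_1(F),\dots,p_9(F)$ (using that $\xf\to\x$ is a covering), and the distances $d_k(F):=d(s_\ell(F),p_k(F))$ are continuous; the hypothesis $\epsilon<\pi/3$ should force a canonical continuous choice of an index $k(F)$ with $d_{k(F)}(F)<\epsilon$, after which $\sigma_\ell(F):=p_{k(F)}(F)$ is a continuous section. The open neighborhood $V_\ell$ is obtained by thickening $U_\ell$ inside the natural open domain of the rational functions defining $s_\ell$, intersected with the open condition $\min_k d_k(F)<\epsilon$.

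The resulting $\tau+1$ open sets cover $\x$, since the $U_\ell$ already do, and each carries a continuous section of $\xf\to\x$; by the definition of Schwarz genus, $g(\xf/\x)\le\tau+1$ as required. The main obstacle I expect is the canonical-selection step: it demands a direct Fubini--Study computation on the configuration of nine flex points of a smooth cubic plane curve, verifying that $\epsilon<\pi/3$ really does single out one flex near $s_\ell(F)$. The remaining ingredients---the leaf count, the continuity of $s_\ell$ as a composition of rational operations, and the thickening of a locally closed set into an open set within the domain of a rational function---are standard.
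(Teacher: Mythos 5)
Your proposal follows the same Smale-type reduction the paper uses, and the overall architecture — leaf count, locally closed partition sets $U_\ell$, continuity of the output map, extension to open sets, uniqueness of the nearby flex — matches the paper's proof. But you have explicitly left a gap at exactly the step the paper has to work for: you write that ``the hypothesis $\epsilon<\pi/3$ should force a canonical continuous choice'' and flag this as ``the main obstacle,'' without actually establishing the geometric fact that makes it true.

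The paper closes this gap by proving that the Fubini--Study distance between \emph{any} two flex points on \emph{any} smooth cubic plane curve is exactly $2\pi/3$. The argument: it suffices to check this for curves $F_\lambda$ in the Hesse pencil, since every smooth cubic is projectively equivalent to one of them and the Fubini--Study metric is $\PGL$-invariant; moreover all pairs of flexes on $F_\lambda$ are at the same distance because the Hessian group $\gm\subset\PU$ acts $2$-transitively (transitively on ordered pairs of distinct flexes) and isometrically; and an explicit check on the pair $[1:-1:0]$, $[1:-e^{2\pi i/3}:0]$ gives $2\pi/3$. Given this, if $\epsilon<\pi/3$ then a point within $\epsilon$ of two distinct flexes $q_1,q_2$ would force $2\pi/3=d(q_1,q_2)\le d(q_1,p)+d(p,q_2)<2\epsilon<2\pi/3$, a contradiction, so the nearby flex is unique; continuity of $F\mapsto s_\ell(F)$ then follows from the covering-space structure of $\xf\to\x$, with no need for the slightly awkward ``continuous choice of index'' framing. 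You should supply this computation rather than defer it: without it, the proof is incomplete precisely where the hypothesis $\epsilon<\pi/3$ enters, and the bound $\pi/3$ is not self-evident. One other small difference worth noting: the paper extends $\phi_i$ from the locally closed set $V_i$ to an open neighborhood via the Tietze Extension Theorem, which works even if the computation nodes are allowed to be arbitrary continuous functions (the paper remarks on this generality), whereas your reliance on the natural domain of the rational functions ties the argument more tightly to Smale's rational-operation model. Both are fine for the statement at hand, but Tietze is the more robust choice.
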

This proposition is a direct analog of the  ``Smale principle" as Vassiliev calls it (Section 2.3 in \cite{Vassiliev}). Specifically, Smale proved that the topological complexity of the problem of finding all roots within $\epsilon$ of a complex polynomial $f(z)$ is bounded from below by the Schwarz genus of the cover associated to the problem minus one, for all $\epsilon$ sufficiently small (See Theorem A in \cite{Smale}). After his proof, Smale remarked that the argument works in ``considerably greater generality". Our proof of Proposition \ref{Smale principle} below is a slight modification of Smale's argument.  Even better, we can get an explicit bound $\pi/3$ for how small $\epsilon$ needs to be. 

\begin{proof}
It suffices to show that any algorithm  with $k$ leaves (and hence with $k-1$ branching nodes) solving the problem $\pe$ gives an open cover of $\x$ with $k$ subsets and a section of the cover $\xf\to\x$ defined on each subset. 

The explicit upper bound $\pi/3$ comes from the following observation: the distance between any two flex points on an arbitrary smooth cubic plane curve is exactly $2\pi/3$. To see this, it suffices to consider only those curves $F_\lambda$ in the Hesse pencil (\ref{pencil}) because every cubic curve is projective equivalent to a curve in the Hesse pencil and the Fubini-Study metric on $\CP^2$ is $\PGL$-invariant. Moreover, any pair of flex points of $F_\lambda$ has the same distance because the action of $\gm\subset\PGL$ on the 9 flex points is 2-transitive, \emph{i.e.} transitive on pairs of distinct elements. Finally, the distance between two flex points of $F_\lambda$, for example, $[1:-1:0]$ and $[1:-e^{2\pi/3}:0]$, is $2\pi/3$.


Suppose we are given an algorithm solving $\pe$ with output leaves numbered $i=1,\cdots, k$. Let $V_i$ denote the subset of $\x$ consisting of all those inputs that will arrive at the $i$-th output leaf of the algorithm. Then $\x$ as a set is a disjoint union of $V_i$'s. Moreover, each $V_i$ is the intersection of a closed subset in $\x$, consisting of inputs satisfying $h\le 0$ at each branching node where the path from the input root to the $i$-th output leaf goes to the right, and an open subset in $\x$, consisting of inputs not satisfying $h\le 0$ at each branching node where the same path goes to the left. The algorithm gives a map $\phi_i:V_i\to \CP^2$ for each $i=1,\cdots,k$ such that for any $F\in V_i$, the point $\phi_i(F)$ is $\epsilon$-close to a flex point of $F$. By the Tietze Extension Theorem, $\phi_i$ can be extended to an open subset $U_i\supseteq V_i$ satisfying the same property: any $F\in U_i$, the point $\phi_i(F)$ is $\epsilon$-close to a unique flex point of $F$, which we denote by $s_i(F)$. The flex point $s_i(F)$ is unique because $\epsilon<\pi/3$, which is one half of the distance between any two distinct flex points on $F$. Now we have an open cover $U_i$'s of $\x$ by $k$ many subsets, on which there is a section $s_i:U_i\to \xf$ of the cover $\xf\to\x$. 
\end{proof}

\begin{remark}
Unlike Smale's proof in \cite{Smale}, our proof works in greater generality if we allow algorithms to have computation nodes by continuous functions, rather than by rational functions only.
\end{remark}

\section{What is left to determine the Schwarz genus?}

Theorem \ref{main} shows that $g(\xf/\x)$ is either 8 or 9. What do we need to determine which one is the case?  

Applying Theorem \ref{fiberwise join} to the cover $\xf\to \x$, we see that $g(\xf/\x)\le 8$ if and only if the fiber bundle 
\begin{equation}
    \label{8 join}
    F^{*8}\to E_8\to\x
\end{equation}
has a continuous section where $F$ denotes the fiber of $\xf\to\x$. Since $F$ is just a set of 9 points, $F^{*8}$ is a wedge of 7-spheres. The first obstruction to a section of the bundle above is 
$$\mathfrak{o}_8\in H^8(\x;\pi_{7}(F^{*8})).$$
Moreover, one can show that $$\pi_{7}(F^{*8})\cong \widetilde{H}_0(F;\Z)^{\otimes 8} \cong(\Z^8)^{\otimes 8}.$$

\begin{prop}
    $g(\xf/\x)=8$ if and only if the first obstruction $\mathfrak{o}_8$ is zero.
\end{prop}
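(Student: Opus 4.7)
The plan is to combine Theorem \ref{fiberwise join} with obstruction theory, reducing the sectioning problem from $\x$ to the $8$-dimensional manifold $\PU/\gm$ via the pullback structure of Proposition \ref{commutative diagram}. By Theorem \ref{fiberwise join} applied with $k = 8$, together with the already-established lower bound $g(\xf/\x) \ge 8$ of Theorem \ref{lower bound}, the equality $g(\xf/\x) = 8$ is equivalent to the existence of a continuous section of the join bundle $\xi^{*8} : E_8 \to \x$. Since the fiber $F^{*8}$ is a wedge of $8^8$ copies of $S^7$ and hence $6$-connected, $\mathfrak{o}_8 \in H^8(\x; \pi_7(F^{*8}))$ is by definition the primary obstruction to such a section, so its vanishing is necessary; this gives the ``only if'' direction at once.

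For the converse, let $\eta : \PU/\go \to \PU/\gm$ denote the associated $9$-fold cover on the closed real $8$-manifold $\PU/\gm$, and let $p : \x \to \PU/\gm$ be the natural projection obtained (after deformation retracting $\PGL$ onto $\PU$) from the first factor of $\PU \times_\gm \He$. By Proposition \ref{commutative diagram} we have $\xi^{*8} = p^*(\eta^{*8})$, and by naturality of the primary obstruction $\mathfrak{o}_8 = p^*(\tilde{\mathfrak{o}}_8)$ where $\tilde{\mathfrak{o}}_8 \in H^8(\PU/\gm; \pi_7(F^{*8}))$ is the corresponding primary class for $\eta^{*8}$. Because $\PU/\gm$ has real dimension $8$, the higher obstructions for $\eta^{*8}$ all lie in $H^n(\PU/\gm; \pi_{n-1}(F^{*8})) = 0$ for $n \ge 9$, so $\eta^{*8}$ admits a section if and only if $\tilde{\mathfrak{o}}_8 = 0$; any such section pulls back via $p$ to a section of $\xi^{*8}$ over $\x$.

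To complete the proof I must show that $\mathfrak{o}_8 = 0$ forces $\tilde{\mathfrak{o}}_8 = 0$, which reduces to the injectivity of $p^* : H^8(\PU/\gm; \pi_7(F^{*8})) \to H^8(\x; \pi_7(F^{*8}))$ on the class $\tilde{\mathfrak{o}}_8$. I expect this to be the hardest part of the argument. The plan is to analyze the Leray--Serre spectral sequence of the fibration $\He \to \x \to \PU/\gm$: since $\He \simeq \bigvee_3 S^1$, only rows $q = 0$ and $q = 1$ of the $E_2$-page are nonzero, so the edge map $H^8(\PU/\gm; -) \to H^8(\x; -)$ fails to be injective only through the single differential $d_2 : E_2^{6,1} \to E_2^{8,0}$. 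Showing that $d_2$ either vanishes outright or does not hit $\tilde{\mathfrak{o}}_8$ using the explicit group-theoretic description of the fibration, in particular the fact from Section \ref{section Hesse} that the subgroup $K \le \gm$ acts trivially on $\He$ so that the monodromy factors through $\gm/K \cong \SL_2(\F_3)$, should establish the required injectivity and close the argument.
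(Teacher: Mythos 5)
Your overall architecture — ``only if'' from obstruction theory, ``if'' by pulling the problem down to the $8$-dimensional manifold $\PU/\gm$ via the fibration of Proposition \ref{commutative diagram} — matches the paper's. The ``only if'' direction and the observation that $\eta^{*8}$ over $\PU/\gm$ has $\tilde{\mathfrak{o}}_8$ as its \emph{only} obstruction (because $\dim \PU/\gm = 8$) are both correct.

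However, your proposal is not a complete proof, and you are candid about this yourself. For the ``if'' direction you correctly isolate the needed implication $\mathfrak{o}_8 = 0 \Rightarrow \tilde{\mathfrak{o}}_8 = 0$, which amounts to $p^\ast$ not killing $\tilde{\mathfrak{o}}_8$, and you propose to attack it through the Leray--Serre spectral sequence of $\He \to \x \to \PU/\gm$: you note that the only possible source of a kernel is $d_2 : E_2^{6,1} \to E_2^{8,0}$ (this is right, since $\He \simeq \bigvee_3 S^1$ leaves only rows $q=0,1$), and suggest using the triviality of the $K$-action on $\He$. But you never actually show $d_2$ vanishes on the relevant class, or that $\tilde{\mathfrak{o}}_8$ avoids its image, so the argument stops short of a proof. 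The monodromy on the coefficients $\pi_7(F^{*8})$ does \emph{not} factor through $\gm/K$ (the cover $\xf \to \x$ has $K$-equivariant monodromy on the fiber $F$, and $K$ acts nontrivially there), so the factorization you cite controls the action on $H^1(\He)$ but not on $M$; the $E_2^{6,1}$ computation is genuinely nontrivial and is left undone.

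It is worth noting that the paper's own proof of this proposition does not address this point either. The paper asserts that the higher obstruction classes on $\x$ ``are pullbacks of the corresponding obstruction classes on $\PU/\gm$'' and concludes they vanish because $H^n(\PU/\gm;-)=0$ for $n \ge 9$. But secondary and higher obstructions are only pullbacks of those on $\PU/\gm$ when the partial section over the $8$-skeleton of $\x$ is itself pulled back from a partial section over $\PU/\gm$, which requires $\tilde{\mathfrak{o}}_8 = 0$ — exactly the missing implication. So you have diagnosed a real subtlety that the paper's argument glosses over; you simply have not filled it. To make either argument watertight one would need to establish injectivity of $p^\ast : H^8(\PU/\gm;\pi_7(F^{*8})) \to H^8(\x;\pi_7(F^{*8}))$ (or at least that $\tilde{\mathfrak{o}}_8$ is not killed), e.g.\ by completing the $d_2$ computation you outline, or by exhibiting some retraction or transfer for $p$ that applies to the twisted coefficients at hand.
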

\begin{proof}
The ``only if" part follows from obstruction theory. To prove the ``if" part, we need to show that there can be no further obstruction after the first one. However, we proved in Proposition \ref{commutative diagram} that the cover $\xf\to\x$ is a pullback of a cover $\PU/\go\to \PU/\gm$. Hence, those obstruction classes on $\x$ are pullbacks of the corresponding obstruction classes on $\PU/\gm$. Now recall that $\PU/\gm$ is an 8-dimensional manifold and thus has no nonzero cohomology class beyond dimension 8. 
\end{proof}
We are not able to determine whether $\mathfrak{o}_8$ is zero or not. One difficulty is that the coefficient module $(\Z^8)^{\otimes 8}$ is too big for explicit calculation by hand or by a computer. However, we can prove that $\mathfrak{o}_8$ is a 3-torsion element. 

\begin{prop}
\label{3-torsion}
    $3\mathfrak{o}_8=0$ in $H^8(\x;\pi_{7}(F^{*8}))$.
\end{prop}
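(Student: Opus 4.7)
The plan is to produce two independent torsion relations on $\mathfrak{o}_8$ using the transfer homomorphism for finite covers, and then take their $\gcd$. Recall that for any $n$-sheeted covering $q: Y \to X$, the transfer $\mathrm{tr}_q: H^*(Y; q^*\mathcal{L}) \to H^*(X; \mathcal{L})$ satisfies $\mathrm{tr}_q \circ q^* = n\cdot \mathrm{id}$, and this continues to hold for arbitrary local coefficients $\mathcal{L}$.

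For the first relation, I would transfer along the cover $p: \xf \to \x$ itself, of degree $9$. Its self-pullback $\xf \times_{\x} \xf \to \xf$ carries the diagonal section $q \mapsto (q,q)$; consequently the pulled-back $F^{*8}$-bundle has a section, so $p^*\mathfrak{o}_8 = 0$ and transfer gives $9\,\mathfrak{o}_8 = 0$.

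For the second relation, I would transfer along the $24$-sheeted cover $\x_K := \PU\times_K\He \to \x$ attached to the subgroup $K = \langle A,B\rangle$. Since $A$ and $B$ fix every curve in the Hesse pencil, $K$ acts trivially on $\He$, and so $\x_K \cong (\PU/K)\times \He$. A short double-coset argument using that $K$ acts simply transitively on the nine flex points and that $K\cap\Gamma_0 = \{1\}$ identifies the pullback of $\xf \to \x$ along $\x_K \to \x$ with the product cover $(\PU \to \PU/K) \times \mathrm{id}_{\He}$, that is, the principal $K$-bundle $\PU\to\PU/K$ pulled back via projection. Granted the upper bound $g(\PU\to\PU/K) \leq 8$ (to be established in Proposition~\ref{K upper bound} below), the $8$-fold fiberwise join of this $K$-cover admits a section over the $8$-manifold $\PU/K$, and in particular the primary obstruction $\mathfrak{o}_8^K \in H^8(\PU/K;\widetilde{H}_0(F)^{\otimes 8})$ vanishes. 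Pulling back along the projection $\x_K \to \PU/K$ shows $\mathfrak{o}_8|_{\x_K} = 0$, and transfer yields $24\,\mathfrak{o}_8 = 0$.

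Combining the two relations, since $\gcd(9,24) = 3$, writing $3 = 9a + 24b$ for integers $a,b$ gives $3\,\mathfrak{o}_8 = 9a\,\mathfrak{o}_8 + 24b\,\mathfrak{o}_8 = 0$. The entire weight of the argument is carried by Proposition~\ref{K upper bound}; everything else is a formal application of transfer with local coefficients, so the main obstacle is exactly establishing the matching upper bound on the Schwarz genus of the $K$-cover $\PU\to\PU/K$, whose lower bound of $8$ is already furnished by Proposition~\ref{homological-genus}.
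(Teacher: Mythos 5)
Your proposal is correct and is essentially the paper's own argument: both obtain $9\mathfrak{o}_8=0$ from the tautological section over the degree-9 cover $\xf\to\x$ and $24\mathfrak{o}_8=0$ from a degree-24 cover attached to $K=\langle A,B\rangle$, and then write $3=3\cdot 9-24$; in both cases the entire weight rests on $g(\PU\to\PU/K)\le 8$, which is exactly Proposition \ref{K upper bound}, proved in the paper by exhibiting $H=\{\mathrm{diag}(1,z,z)\}\cong\U_1$ meeting every conjugate of $K$ trivially so that the cover is pulled back from the 7-manifold $H\backslash\PU/K$. One small repair to your construction: since $\x\cong\PGL\underset{\gm}{\times}\He$ and not every element of $\PGL$ lies in $\PU\cdot\gm=\PU$, the map $\PU\underset{K}{\times}\He\to\x$ is a degree-24 covering only onto the proper subspace $\PU\underset{\gm}{\times}\He$, not onto $\x$ itself; replacing it by $\PGL\underset{K}{\times}\He$ (equivalently, pulling back $\PGL/K\to\PGL/\gm$ along $\x\to\PGL/\gm$, as the paper does) fixes this, and the rest of your transfer argument goes through unchanged.
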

\begin{lem}
\label{transfer}
Let $Z\to E\to B$ be a fiber bundle and let $\mathfrak{o}_i\in H^i(B;\pi_{i-1}(Z))$ be the first obstruction class to a section of it. If there is a degree-$d$ cover $\tilde{B}$ of the base $B$ such that the pullback of the bundle to $\tilde{B}$ has a continuous section, then $d\mathfrak{o}_i=0$.
\end{lem}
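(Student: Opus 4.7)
The plan is to combine the naturality of obstruction classes with the classical transfer map for finite covers. Let $\pi:\tilde{B}\to B$ denote the degree-$d$ cover. First I would invoke naturality: pulling back the fiber bundle $Z\to E\to B$ along $\pi$ gives a bundle over $\tilde{B}$ whose first obstruction class is precisely $\pi^*\mathfrak{o}_i\in H^i(\tilde{B};\pi_{i-1}(Z))$. By hypothesis, this pullback admits a continuous section, hence
\[
\pi^*\mathfrak{o}_i=0.
\]

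Next I would introduce the cohomological transfer $\mathrm{tr}:H^*(\tilde{B};\pi_{i-1}(Z))\to H^*(B;\pi_{i-1}(Z))$ associated to the finite cover $\pi$ of degree $d$. The defining property that I will need is the standard identity
\[
\mathrm{tr}\circ\pi^*=d\cdot\mathrm{id}\ \text{on}\ H^*(B;\pi_{i-1}(Z)).
\]
Applying $\mathrm{tr}$ to both sides of $\pi^*\mathfrak{o}_i=0$ therefore yields $d\mathfrak{o}_i=0$, as desired.

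The only step that requires care is verifying that the transfer is well defined with the relevant coefficients. In the application the coefficients form a constant $\Z$-module (indeed $\pi_7(F^{*8})\cong(\Z^8)^{\otimes 8}$), so the classical construction of the transfer for finite covers applies directly; more generally one checks that the local system $\pi_{i-1}(Z)$ pulls back to itself along $\pi$, which is automatic because $\pi$ is a covering and the relevant monodromy factors through $\pi_1(B)$. With this in place, the naturality step and the transfer identity combine to give the lemma in essentially two lines. The main conceptual point—and the only nontrivial ingredient—is the existence of a transfer satisfying $\mathrm{tr}\circ\pi^*=d$, which I will cite from standard references on finite covering spaces (e.g.\ as in the treatment of the Becker--Gottlieb or classical unramified transfer).
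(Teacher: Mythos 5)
Your proof is correct and follows essentially the same approach as the paper: use naturality of the obstruction class to conclude $\pi^*\mathfrak{o}_i=0$, then apply the transfer map for the degree-$d$ cover and the identity $\mathrm{tr}\circ\pi^*=d\cdot\mathrm{id}$ to get $d\mathfrak{o}_i=0$. Your additional remark about the transfer being well defined with local coefficients is a reasonable clarification, but the core argument is identical.
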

\begin{proof}
Let $p:\widetilde{B}\to B$ denote the covering map. We know that $p^*\mathfrak{o}_i=0$ since the pullback bundle has a section. Now, we have a composition of maps
\bea
H^i(B;\pi_{i-1}(Z))\xrightarrow[]{p^*}& H^i(\widetilde{B};\pi_{i-1}(Z))&\xrightarrow{{tr}}H^i(B;\pi_{i-1}(Z))\nn\\
\mathfrak{o}_i\mapsto& p^*\mathfrak{o}_i=0&\mapsto d\mathfrak{o}_i
\eea
where $tr$ denotes the transfer map. Hence, $d\mathfrak{o}_i=0$.
\end{proof}
\begin{proof}[Proof of Proposition \ref{3-torsion}]
The pullback of the fiber bundle (\ref{8 join}) along the degree-$9$ cover $\xf\to\x$ has a continuous section given by the natural inclusion of fibers $F\to F^{*8}$. Lemma \ref{transfer} implies that $9\mathfrak{o}_8=0$.

We claim that to prove Proposition \ref{3-torsion}, it suffices to prove the following 
\begin{quote}
    \textbf{Claim:} There is a degree-24 covering $p:Y\to \x$ such that the pullback of the covering $\xf\to\x$ along $p$ has Schwarz genus $\le8$. 
\end{quote}
Assuming the claim, we have that $24\mathfrak{o}_8=0$ by Lemma \ref{transfer}. Then we have that $3\mathfrak{o}_8 = 3\cdot(9\mathfrak{o}_8)-24\mathfrak{o}_8=0.$

Now we focus on proving the claim. Again, recall from our proof of Proposition \ref{commutative diagram} that the cover $\xf\to\x$ is a pullback of the cover 
\begin{equation}
    \label{PU}
    \PU/\go\to\PU/\gm
\end{equation}
along a map $\x\to\PU/\gm$. Since Schwarz genus is nonincreasing under pullback (Proposition \ref{pullback SG}), in order to prove the claim stated above, it suffices to prove the same claim where we replace $\x$ by $\PU/\gm$ and $\xf$ by $\PU/\go$. Next, we consider the degree-24 covering map
$$p:\PU/K\to \PU/\gm$$
where again $K$ denotes the subgroup defined in (\ref{K}). The cover has degree 24 because $|\gm|=216$ and $|K|=9$. Since $K\cap\go=1$, the pullback of the cover (\ref{PU}) along $p$ is the cover 
$$\PU\to\PU/K.$$
We are done if we can show the following:
\begin{prop}
\label{K upper bound}
    $g(\PU\to\PU/K)\le8$.
\end{prop}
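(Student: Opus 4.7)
The plan is to exhibit $\PU \to \PU/K$ as the pullback of a covering over a $7$-dimensional base, and then apply the dimension bound of Corollary \ref{coho dim}. Concretely, I want to find a circle subgroup $T' \cong S^1 \subset \PU$ such that the left-translation $T'$-action on $\PU/K$ is free; then $M := T'\backslash\PU/K$ is a closed $7$-manifold, and the $K$-cover $\PU \to \PU/K$ will arise as the pullback of a $K$-cover $T'\backslash\PU \to M$ along the principal $S^1$-bundle $\PU/K \to M$.

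The key property needed of $T'$ is that $T' \cap gKg^{-1} = \{1\}$ for all $g \in \PU$; this simultaneously forces the left $T'$-action on $\PU/K$ and the right $K$-action on $T'\backslash\PU$ to be free. To produce such $T'$, I work in a maximal torus $T_{\max} \cong T^2$ of $\PU$. Since $\PU$ is compact and connected, each conjugacy class of a nontrivial element $k \in K$ meets $T_{\max}$ in a single Weyl orbit, so
$$S := T_{\max} \cap \bigcup_{g \in \PU} g(K\setminus\{1\})g^{-1}$$
is a finite subset of $T_{\max}$ not containing the identity. For each rational point $s \in S$, the set of primitive lattice vectors $v \in \Lambda \subset \mathfrak{t}_{\max}$ whose circle $\exp(\R v)$ passes through $s$ is cut out by a single congruence, so a generic choice of primitive $v$ yields a circle $T' := \exp(\R v)$ avoiding every point of $S$.

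Fix such a $T'$. The commuting free $T'$-action (on the left) and $K$-action (on the right) on $\PU$ yield a commutative square
\[
\begin{tikzcd}
\PU \arrow[r] \arrow[d] & T'\backslash\PU \arrow[d] \\
\PU/K \arrow[r] & M.
\end{tikzcd}
\]
A short check using $T' \cap gKg^{-1} = \{1\}$ shows that $g \mapsto (gK, T'g)$ is a bijection from $\PU$ onto the fibered product $\PU/K \times_M (T'\backslash\PU)$, so the square is a pullback of the $9$-fold $K$-covering on the right. Then by Proposition \ref{pullback SG} and Corollary \ref{coho dim} applied to the closed $7$-manifold $M$,
\[
g(\PU \to \PU/K) \;\le\; g(T'\backslash\PU \to M) \;\le\; \dim M + 1 = 8.
\]

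The main geometric input is the existence of $T'$, and the main thing to verify carefully is the pullback property of the square. Both are straightforward once the condition $T' \cap gKg^{-1}=\{1\}$ is secured: finiteness of $S$ uses the Weyl conjugacy theorem, and the pullback claim is a direct unwinding of definitions under this same freeness hypothesis.
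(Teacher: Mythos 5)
Your overall strategy is the same as the paper's: find a circle subgroup of $\PU$ meeting every conjugate of $K$ trivially, form the double coset space (a closed $7$-manifold), and apply Proposition \ref{pullback SG} together with Corollary \ref{coho dim}. Your verification of the pullback square is correct. However, there is a genuine gap in the step where you produce the circle $T'$.

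You argue that $S$ is a finite set of rational points of $T_{\max}$, that for each $s\in S$ the condition ``$\exp(\R v)$ passes through $s$'' is a single congruence on the primitive vector $v$, and therefore a generic primitive $v$ avoids all of $S$. This last inference is not automatic. Every nontrivial element of $K$ has order $3$, so $S\subset T_{\max}[3]\setminus\{1\}$, a set of $8$ points; and every circle $\exp(\R v)$ with $v$ primitive passes through exactly two points of $T_{\max}[3]\setminus\{1\}$. The eight congruence conditions are thus all modulo $3$, and if $S$ were all of $T_{\max}[3]\setminus\{1\}$, \emph{no} primitive $v$ would work. So ``finitely many congruences, hence a generic $v$ works'' is not a valid deduction here; one must actually verify that $S$ is a proper subset. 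In fact a short computation shows that every nontrivial element of $K=\langle A,B\rangle$ has eigenvalues $\{1,w,w^2\}$, so all eight are conjugate in $\PU$ and $S$ is a single Weyl orbit, which one checks has only $2$ of the $8$ nontrivial $3$-torsion points; hence $3$ of the $4$ lines in $\Lambda/3\Lambda$ give admissible $v$. Without some such verification the argument is incomplete.

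The paper sidesteps this issue entirely by taking the explicit circle $H=\{\mathrm{diag}(1,z,z):z\in\U_1\}$ and observing that every nontrivial element of $H$ has exactly $2$ distinct eigenvalues whereas every nontrivial element of $K$ has $3$; since the number of distinct eigenvalues is conjugation-invariant and well-defined in $\PU$, this gives $H\cap gKg^{-1}=1$ for all $g$ with no genericity argument needed. I'd recommend either adopting that explicit choice, or supplying the computation of $S$ to close the gap in your version.
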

\begin{proof}
Our strategy is to find a closed subgroup $H\cong \U_1$ inside $\PU$ such that the action of $K$ from the right on the (left) coset space $H\backslash \PU$ is free. If we can find such $H$, then we have the following pullback diagram of covers:
\[\begin{tikzcd}
	\PU & H\backslash\PU \\
	\PU/K & H\backslash\PU/K 
	\arrow[from=1-1, to=1-2]
	\arrow[from=1-1, to=2-1]
	\arrow[from=1-2, to=2-2]
	\arrow[from=2-1, to=2-2]
\end{tikzcd}\]
Then Proposition \ref{K upper bound} will follow from Proposition \ref{pullback SG} and Corollary \ref{coho dim}, together with the observation that the double coset space $H\backslash\PU/K$ is a manifold of dimension $8-1=7$.

Now let's find such $H\le \PU$. We just need $H$ to satisfy that
$$H\cap gKg^{-1}=1,\ \ \ \ \forall g\in\PU.$$
We claim that the subgroup
$$H:=\{\mathrm{diag}(1,z,z)\ |\ z\in \U_1\}$$
suffices. Even though eigenvalues of elements in $\PU$ are only well-defined up to simultaneous multiplication of a scalar, the total number of distinct eigenvalues is well-defined for elements in $\PU$. By a straightforward computation, we  check that every nontrivial element  in $K=\langle A,B\rangle$ has 3 distinct eigenvalues. However, every nontrivial element in $H$ has 2 distinct eigenvalues. Hence, $H$ intersects trivially with any conjugate of $K$ in $\PU$.
\end{proof}

\end{proof}

\begin{remark}
Is it possible to improve our arguments above to get better lower or upper bounds for $g(\xf/\x)$? It seems not for the following reasons. 

We obtained the lower bound $g(\xf/\x)\ge 8$ in Theorem \ref{lower bound} by showing that $g(\PU\to\PU/K)\ge 8$ in Proposition \ref{homological-genus}. Now Proposition \ref{K upper bound} tells us that our previous argument cannot be improved to obtain any stronger lower bound.

In the opposite direction, one might hope to improve the upper bound $g(\xf/\x)\le 9$ in Theorem \ref{upper bound} by adapting our proof of Proposition \ref{K upper bound} to the cover $\PU/\go\to\PU/\gm$ if we replace $K$ by the larger group $\gm$. This is not possible because one can show that any closed subgroup $H$ in $\PU$ will have a nontrivial intersection with some conjugate of $\gm$. 
\end{remark}

 \bibliographystyle{abbrv}
\bibliography{main.bib}

\end{document}